\newcommand{\p}{{\mathtt{n}_{\mathtt{p}}}}
\newtheorem{remark}{Remark}
\DeclareMathOperator*{\argmax}{arg\,max}
\title{Parallel-in-Time Solver for the All-at-Once Runge--Kutta Discretization}
\author{Santolo Leveque\thanks{CRM Ennio De Giorgi, Scuola Normale Superiore, Piazza dei Cavalieri 3, 56126, Pisa, Italy ({\tt santolo.leveque@sns.it})}
\and
Luca Bergamaschi\thanks{Department of Civil Environmental and Architectural Engineering, University of Padova, Via Marzolo 9, 35100, Padova, Italy (\tt luca.bergamaschi@unipd.it)}
\and
\'{A}ngeles Mart\'{i}nez\thanks{Department of Mathematics and Geosciences, University of Trieste, Via Valerio 12/1, 34127, Trieste, Italy (\tt amartinez@units.it)}
\and
John W. Pearson\thanks{School of Mathematics, The University of Edinburgh, James Clerk Maxwell Building, The King's Buildings, Peter Guthrie Tait Road, Edinburgh, EH9 3FD, United Kingdom ({\tt j.pearson@ed.ac.uk})}}
\begin{document}
\maketitle

\begin{abstract}
In this article, we derive fast and robust parallel-in-time preconditioned iterative methods for the all-at-once linear systems arising upon discretization of time-dependent PDEs. The discretization we employ is based on a Runge--Kutta method in time, for which the development of parallel solvers is an emerging research area in the literature of numerical methods for time-dependent PDEs. By making use of classical theory of block matrices, one is able to derive a preconditioner for the systems considered. The block structure of the preconditioner allows for parallelism in the time variable, as long as one is able to provide an optimal solver for the system of the stages of the method. We thus propose a preconditioner for the latter system based on a singular value decomposition (SVD) of the (real) Runge--Kutta matrix $A_{\mathrm{RK}} = U \Sigma V^\top$. Supposing $A_{\mathrm{RK}}$ is invertible, we prove that the spectrum of the system for the stages preconditioned by our SVD-based preconditioner is contained within the right-half of the unit circle, under suitable assumptions on the matrix $U^\top V$ (the assumptions are well posed due to the polar decomposition of $A_{\mathrm{RK}}$). We show the numerical efficiency of our SVD-based preconditioner by solving the system of the stages arising from the discretization of the heat equation and the Stokes equations, with sequential time-stepping. Finally, we provide numerical results of the all-at-once approach for both problems, showing the speed-up achieved on a parallel architecture.
\end{abstract}

\begin{keywords}Time-dependent problems, Parabolic PDE, Preconditioning, Saddle-point systems\end{keywords}

\begin{AMS}65F08, 65F10, 65N22\end{AMS}

\pagestyle{myheadings}
\thispagestyle{plain}
\markboth{S. LEVEQUE, L. BERGAMASCHI, \'{A}. MART\'{I}NEZ, AND J. W. PEARSON}{ALL-AT-ONCE SOLVER FOR RUNGE--KUTTA DISCRETIZATION}

\section{Introduction}\label{sec_1}
Time-dependent partial differential equations (PDEs) arise very often in the sciences, from mechanics to thermodynamics, from biology to economics, from engineering to chemistry, just to name a few. In fact, many physical processes can be described by the relation of some physical quantities using a differential operator. As problems involving (either steady or unsteady) PDEs usually lack a closed form solution, numerical methods are employed in order to find an approximation of it. These methods are based on discretizations of the quantities involved. For time-dependent PDEs, the discretization has also to take into account the time derivative. Classical numerical approaches employed to solve time-dependent PDEs result in a sequence of linear systems to be solved sequentially, mimicking the evolution in time of the physical quantities involved.

In the last few decades, many researchers have devoted their effort to devising parallel-in-time methods for the numerical solution of time-dependent PDEs, leading to the development of the Parareal \cite{Lions_Maday_Turinici}, the Parallel Full Approximation Scheme in Space and Time (PFASST) \cite{Emmett_Minion}, and the Multigrid Reduction in Time (MGRIT) \cite{Falgout_Friedhoff_Kolev_MacLachlan_Schroder, Friedhoff_Falgout_Kolev_MacLachlan_Schroder} algorithms, for example. As opposed to the classical approach, for which in order to obtain an approximation of the solution of an initial boundary value problem at a time $t$ one has to find an approximation of the solution at all the previous times, parallel-in-time methods approximate the solution of the problem for all times \emph{concurrently}. This in turns allows one to speed-up the convergence of the numerical solver by running the code on parallel architectures.

Among all the parallel-in-time approaches for solving time-dependent PDEs, increasing consideration has been given to the one introduced by Maday and R{\o}nquist in 2008 \cite{Maday_Ronquist}, see, for example, \cite{Gander_Halpern_Rannou_Ryan_2019, Liu_Wang_Wu_Zhou, McDonald_Pestana_Wathen}. This approach is based on a \emph{diagonalization} (ParaDiag) of the \emph{all-at-once} linear system arising upon the discretization of the differential operator. The diagonalization of the all-at-once system can be performed in two ways. First, by employing a non-constant time-step, for instance by employing a geometrically increasing sequence $\tau_1, \tau_2,\ldots,\tau_{n_t}$ as in \cite{Maday_Ronquist}, one can prove that the discretized system is diagonalizable \cite{Gander_Halpern_Rannou_Ryan_2016, Gander_Halpern_Rannou_Ryan_2019}. Second, one can employ a constant time-step and approximate the block-Toeplitz matrix arising upon discretization by employing, for instance, a circulant approximation \cite{McDonald_Pestana_Wathen}. Either way, the diagonalization of the all-at-once linear system under examination allows one to devise a preconditioner that can be run in parallel, obtaining thus a substantial speed-up.

Despite the efficiency and robustness of the ParaDiag preconditioner applied to the all-at-once discretization of the time-dependent PDE studied, this approach has a drawback. In fact, the discretization employed is based on linear multistep methods. It is well known that this class of methods are (in general) not A-stable, a property that allows one to choose an arbitrary time-step for the integration. Specifically, an A-stable linear multistep method cannot have order of convergence greater than two, as stated by the second Dahlquist barrier, see for example \cite[Theorem\,6.6]{Lambert}. In contrast, it is well known that one can devise an A-stable (implicit) Runge--Kutta method of any given order. Further, implicit Runge--Kutta methods have better stability properties than the linear multistep methods, (e.g., L- or B-stability, see for instance \cite{Butcher, Hairer_Wanner, Lambert}). For these reasons, in this work we present a (fully parallelizable) preconditioner for the all-at-once linear system arising when a Runge--Kutta method is employed for the time discretization. To the best of the authors' knowledge, this is the first attempt that fully focuses on deriving such a preconditioner of the form below for the all-at-once Runge--Kutta discretization. In this regard, we would like to mention the work \cite{Kressner_Massei_Zhu}, where the authors derived two solvers for the linear systems arising from an all-at-once approach of space-time discretization of time-dependent PDEs. The first solver is based on the observation that the numerical solution can be written as the sum of the solution of a system involving an $\alpha$-circulant matrix with the solution of a Sylvester equations with a right-hand side of low rank. The second approach is based on an interpolation strategy: the authors observe that the numerical solution can be approximated (under suitable assumptions) by a linear combination of the solutions of systems involving $\alpha$-circulant matrices, where $\alpha$ is the $j$th root of unity, for some integer $j$. Although the authors mainly focused on multistep methods, they adapted the two strategies in order to tackle also the all-at-once systems obtained when employing a Runge--Kutta method in time. We would like to note that the preconditioner derived in our work does not exploit the block-Toeplitz structure of the system arising upon discretization, therefore it could be employed also with a non-constant time-step.

We would like to mention that, despite the fact that one can obtain better stability properties when employing Runge--Kutta methods, this comes to a price. In fact, this class of method results in very large linear systems with a very complex structure: in order to derive an approximation of the solution at a time $t$, one has to solve a (sequence of) linear system(s) for the \emph{stages} of the discretization. Of late, a great effort has been devoted to devising preconditioners for the numerical solution for the stages of a Runge--Kutta method, see for example \cite{AbuLabdeh_MachLachlan_Farrell, Axelsson_Dravins_Neytcheva, Mardal_Nilssen_Staff, Rana_Howle_Long_Meek_Milestone, Southworth_Krzysik_Pazner_Sterck, Staff_Mardal_Nilssen}. As we will show below, the preconditioner for the all-at-once Runge--Kutta discretization results in a block-diagonal solve for all the stages of all the time-steps, and a Schur complement whose inverse can be applied by solving again for the systems for the stages of the method. Since the most expensive task is to (approximately) invert the system for the stages, in this work we also introduce a new block-preconditioner for the stage solver. This preconditioner is based on a SVD of the Runge--Kutta coefficient matrix, and has many advantages as we will describe below. Alternatively, one can also employ the strategies described in \cite{AbuLabdeh_MachLachlan_Farrell, Axelsson_Dravins_Neytcheva, Mardal_Nilssen_Staff, Southworth_Krzysik_Pazner_Sterck}, for example.

This paper is structured as follows. In Section \ref{sec_2}, we introduce Runge--Kutta methods and present the all-at-once system obtained upon discretization of a time-dependent differential equation by employing this class of methods in time. Then, we give specific details of the all-at-once system for the discretization of the heat equation and the Stokes equations. In Section \ref{sec_3}, we present the proposed preconditioner for the all-at-once system together with the approximation of the system of the stages of the Runge--Kutta method, for all the problems considered in this work. In Section \ref{sec_4}, we show the robustness and the parallel efficiency of the proposed preconditioning strategy. Finally, conclusions and future work are given in Section \ref{sec_5}.

\section{Runge--Kutta methods}\label{sec_2}
In this section, we present the linear systems arising upon discretization when employing a Runge--Kutta method in time. In what follows, $I_m$ represents the identity matrix of dimension $m$.

For simplicity, we integrate the ordinary differential equation $v'(t)=f(v(t),t)$ between $0$ and a final time $t_f>0$, given the initial condition $v(0)=v_0$. After dividing the time interval $[0,t_f]$ into $n_t$ subintervals with constant time-step $\tau$, the discretization of an $s$-stage Runge--Kutta method applied to $v'(t)=f(v(t),t)$ reads as follows:
\begin{displaymath}
v_{n+1}= v_{n} + \tau \sum_{i=1}^{s} b_i k_{i,n}, \quad n = 0, \ldots, n_t-1,
\end{displaymath}
where the stages $k_{i,n}$ are given by\footnote{Note that the stages $k_{i,n}$ for $i=1, \ldots,s$ represent an approximation of the time derivative of $v$.}
\begin{equation}\label{stages_Runge_Kutta}
k_{i,n} = f\left(v_n + \tau \sum_{j=1}^{s} a_{i,j} k_{j,n}, t_n + c_i \tau\right), \qquad i=1,\ldots,s,
\end{equation}
with $t_n=n \tau$. The Runge--Kutta method is uniquely defined by the coefficients $a_{i,j}$, the weights $b_{i}$, and the nodes $c_{i}$, for $i,j=1,\ldots,s$. For this reason, an $s$-stage Runge--Kutta method is defined by the following Butcher tableau:
\begin{equation*}\label{butcher_tableau}
\begin{array}{c|ccc}
c_1 & a_{1,1} & \ldots & a_{1,s}\\
\vdots & \vdots & \ddots & \vdots\\
c_s & a_{s,1} & \ldots & a_{s,s}\\
\hline
 & b_1 & \ldots & b_{s}
\end{array}
\end{equation*}
or in a more compact form
\begin{displaymath}
\def\arraystretch{1.2}
\begin{array}{c|c}
\mathbf{c}_{\mathrm{RK}} & A_{\mathrm{RK}}\\
\hline
 & \mathbf{b}_{\mathrm{RK}}^\top
\end{array}
\end{displaymath}
A Runge--Kutta method is said to be explicit if $a_{i,j}=0$ when $i \leq j$, otherwise it is called implicit. Note that for implicit Runge--Kutta method the stages are obtained by solving the non-linear equations \eqref{stages_Runge_Kutta}.

In the following, we will employ Runge--Kutta methods as the time discretization for the time-dependent differential equations considered in this work. In particular, we will focus here on the Stokes equations. Since this system of equations is properly a differential--algebraic equation (DAE), we cannot employ the Runge--Kutta method as we have described above. In order to fix the notation, given a domain $\Omega \subset \mathbb{R}^d$, with $d=1,2,3$, and a final time $t_f>0$, we consider the following DAE:
\begin{equation*}\label{general_DAE}
\left\{
\begin{array}{rl}
\vspace{0.25ex}
\frac{\partial {v}}{\partial t} + \mathcal{D}_1 v = {f}(\mathbf{x},t) & \quad \mathrm{in} \; \Omega \times (0,t_{f}), \\
\vspace{0.25ex}
\mathcal{D}_2 v = {g}(\mathbf{x},t) & \quad \mathrm{in} \; \Omega \times (0,t_{f}),
\end{array}
\right.
\end{equation*}
given some suitable initial and boundary conditions. Here, $\mathcal{D}_1$ and $\mathcal{D}_2$ are differential operators (only) in space. In addition, the variable $v$ may be a vector, and contains all the physical variables described by the DAE (e.g., the \emph{temperature} for the heat equation, or the \emph{velocity} of the fluid and the \emph{kinematic pressure} for the Stokes equations). For this reason, in our notation $\mathcal{D}_1$ and $\mathcal{D}_2$ as well as their discretizations have to be considered as ``vector'' differential operators. In what follows, we will suppose that the differential operators $\mathcal{D}_1$ and $\mathcal{D}_2$ are linear and time-independent.

Given suitable discretizations $\mathbf{D}_1$ and $\mathbf{D}_2$ of $\mathcal{D}_1$ and $\mathcal{D}_2$ respectively, after dividing the time interval $[0,t_f]$ into $n_t$ subintervals with constant time-step $\tau$, a Runge--Kutta discretization reads as follows:
\begin{equation}\label{RK_discretization_for_v}
\mathbf{M} \mathbf{v}_{n+1} = \mathbf{M} \mathbf{v}_n + \tau \mathbf{M} \sum_{i=1}^{s} b_i \mathbf{k}_{i,n}, \quad n = 0, \ldots, n_t-1,
\end{equation}
with a suitable discretization of the initial and boundary conditions. Here, $\mathbf{v}_{n}$ represents the discretization of $v$ at time $t_n$. The vectors $\mathbf{k}_{i,n}$ are defined as follows:
\begin{equation}\label{RK_stages}
\left\{
\begin{array}{ll}
\vspace{0.25ex}
\displaystyle \widehat{\mathbf{M}} \mathbf{k}_{i,n} + \mathbf{D}_1 \mathbf{v}_n + \tau \mathbf{D}_1 \sum_{j=1}^{s} a_{i,j} \mathbf{k}_{j,n} = \mathbf{f}_{i,n},& \quad i=1,\ldots,s,\\
\displaystyle \mathbf{D}_2 \mathbf{v}_n + \tau \mathbf{D}_2 \sum_{j=1}^{s} a_{i,j} \mathbf{k}_{j,n} = \mathbf{g}_{i,n},& \quad i=1,\ldots,s,
\end{array}
\right.
\end{equation}
for $n=0,\ldots,n_t-1$, where $\mathbf{f}_{i,n}$ and $\mathbf{g}_{i,n}$ are discretizations of the functions $f$ and $g$ at the time $t_n+ c_i \tau$, respectively. Finally, the matrices $\mathbf{M}$ and $\widehat{\mathbf{M}}$ are suitable discretizations of the identity operator, with $\mathbf{M}$ defined over the whole space the variable $v(\cdot,t_n)$ belongs to, while $\widehat{\mathbf{M}}$ may be defined over only a subset of the latter space. We recall that the stages $\mathbf{k}_{i,n}$ for $i=1, \ldots,s$ represent an approximation of $\frac{\partial v}{\partial t}$; therefore, the (Dirichlet) boundary conditions on $\mathbf{k}_{i,n}$ are given by the time derivatives of the corresponding boundary conditions on $v$. Note that, if we relax the assumption of $\mathcal{D}_1$ and $\mathcal{D}_2$ being time-independent, \eqref{RK_stages} would be properly written as follows:
\begin{displaymath}
\left\{
\begin{array}{ll}
\vspace{0.25ex}
\displaystyle \widehat{\mathbf{M}} \mathbf{k}_{i,n} + \mathbf{D}_1^{(n,i)} \mathbf{v}_n + \tau \mathbf{D}_1^{(n,i)} \sum_{j=1}^{s} a_{i,j} \mathbf{k}_{j,n} = \mathbf{f}_{i,n},& \quad i=1,\ldots,s,\\
\displaystyle \mathbf{D}_2^{(n,i)} \mathbf{v}_n + \tau \mathbf{D}_2^{(n,i)} \sum_{j=1}^{s} a_{i,j} \mathbf{k}_{j,n} = \mathbf{g}_{i,n},& \quad i=1,\ldots,s,
\end{array}
\right.
\end{displaymath}
with $\mathbf{D}_1^{(n,i)}$ and $\mathbf{D}_2^{(n,i)}$ the discretizations of the differential operators $\mathcal{D}_1$ and $\mathcal{D}_2$ at time $t_n+c_i\tau$, respectively, for $n=0,\ldots,n_t-1$.

In compact form, we can rewrite \eqref{RK_stages} as follows:
\begin{displaymath}
\left\{
\begin{array}{l}
\vspace{0.25ex}
(I_s \otimes \widehat{\mathbf{M}}) \mathbf{k}_{n} + (\mathbf{e} \otimes \mathbf{D}_1) \mathbf{v}_n + \tau (A_{\mathrm{RK}} \otimes \mathbf{D}_1) \mathbf{k}_{n} = \mathbf{f}_{n},\\
(\mathbf{e} \otimes \mathbf{D}_2) \mathbf{v}_n + \tau (A_{\mathrm{RK}} \otimes \mathbf{D}_2) \mathbf{k}_{n} = \mathbf{g}_{n},
\end{array}
\right.
\end{displaymath}
where $\mathbf{e}\in \mathbb{R}^s$ is the column vector of all ones. Here, we set $\mathbf{k}_{n}=[\mathbf{k}_{1,n}^\top,\ldots,\mathbf{k}_{s,n}^\top]^\top$, $\mathbf{f}_{n}=[\mathbf{f}_{1,n}^\top,\ldots,\mathbf{f}_{s,n}^\top]^\top$, and $\mathbf{g}_{n}=[\mathbf{g}_{1,n}^\top,\ldots,\mathbf{g}_{s,n}^\top]^\top$. Further, we may rewrite \eqref{RK_discretization_for_v} as
\begin{displaymath}
\mathbf{M} \mathbf{v}_{n+1} = \mathbf{M} \mathbf{v}_n + \tau ( \mathbf{b}_{\mathrm{RK}}^\top \otimes \mathbf{M} ) \mathbf{k}_{n}.
\end{displaymath}

We are now able to write the all-at-once system for the Runge--Kutta discretization in time. By setting $\mathbf{v}=[\mathbf{v}_{0}^\top,\ldots, \mathbf{v}_{n_t}^\top]^\top$ and $\mathbf{k}=[\mathbf{k}_{0}^\top,\ldots, \mathbf{k}_{n_t-1}^\top]^\top$, we can rewrite \eqref{RK_discretization_for_v}--\eqref{RK_stages} in matrix form as
\begin{equation}\label{system_all_at_once_RK}
\underbrace{\left[
\begin{array}{cc}
\Phi & \Psi_1\\
\Psi_2 & \Theta
\end{array}
\right]}_{\mathcal{A}}
\left[
\begin{array}{c}
\mathbf{v}\\
\mathbf{k}
\end{array}
\right]=
\mathbf{b}.
\end{equation}
The blocks of the matrix $\mathcal{A}$ are given by
\begin{equation}\label{blocks_all_at_once_RK}
\begin{array}{ll}
\vspace{1ex}
\!\!
\Phi \! = \! \left[
\!\!
\begin{array}{cccc}
\mathbf{M}\\
-\mathbf{M} & \ddots\\
 & \ddots & \ddots\\
 & & -\mathbf{M} & \mathbf{M}
\end{array}
\!\!
\right] \!,
&
\!\!
\Psi_1 \! = \! - \! \left[
\!\!
\begin{array}{ccc}
0\\
\tau \mathbf{b}_{\mathrm{RK}}^\top \otimes \mathbf{M} \\
 & \ddots \\
 & & \tau \mathbf{b}_{\mathrm{RK}}^\top \otimes \mathbf{M}
\end{array}
\!\!
\right] \! ,\\
\!\!
\Psi_2 \! = \! \left[
\!\!
\begin{array}{cccc}
\mathbf{e} \otimes \mathbf{D}_1\\
\mathbf{e} \otimes \mathbf{D}_2\\
 & \ddots\\
 & & \mathbf{e} \otimes \mathbf{D}_1 & 0\\
 & & \mathbf{e} \otimes \mathbf{D}_2 & 0\\
\end{array}
\!\!
\right] \!,
&
\!\!
\Theta \! = \! I_{n_t} \! \otimes \!
\left[
\!\!
\begin{array}{c}
I_s \otimes \widehat{\mathbf{M}} + \tau A_{\mathrm{RK}}\otimes \mathbf{D}_1\\
\tau A_{\mathrm{RK}}\otimes \mathbf{D}_2
\end{array}
\!\!
\right] \!.
\end{array}
\end{equation}
Note that $\Theta$ is block-diagonal.

In what follows, we will present the all-at-once system \eqref{system_all_at_once_RK} for the specific cases of the heat equation and the Stokes equations.

\subsection{Heat equation}\label{sec_2_1}
Given a domain $\Omega \subset \mathbb{R}^d$, with $d=1,2,3$, and a final time $t_f>0$, we consider the following heat equation:
\begin{equation}\label{heat_equation}
\left\{
\begin{array}{rl}
\vspace{0.25ex}
\frac{\partial {v}}{\partial t} - \nabla^2 {v} = {f}(\mathbf{x},t) & \quad \mathrm{in} \; \Omega \times (0,t_{f}), \\
\vspace{0.25ex}
{v}(\mathbf{x},t) = {g}(\mathbf{x},t) & \quad \mathrm{on} \; \partial \Omega \times (0,t_{f}),\\
{v}(\mathbf{x},0) = {v}_0(\mathbf{x}) & \quad \mathrm{in} \; \Omega,

\end{array}
\right.
\end{equation}
where the functions ${f}$ and ${g}$ are known. In addition, the initial condition ${v}_0(\mathbf{x})$ is also given.

After dividing the time interval $[0,t_f]$ into $n_t$ subintervals, the discretization of \eqref{heat_equation} by employing a Runge--Kutta method reads as follows:
\begin{equation}\label{RK_discretization_for_v_heat_equation}
M \mathbf{v}_{n+1} = M \mathbf{v}_n + \tau M \sum_{i=1}^{s} b_i \mathbf{k}_{i,n}, \quad n = 0, \ldots, n_t-1,
\end{equation}
with $M \mathbf{v}_0 = M \mathbf{v}^0$ a suitable discretization of the initial condition. The stages $\mathbf{k}_{i,n}$ are defined as follows:
\begin{equation}\label{RK_stages_heat_equation}
\begin{array}{ll}
M \mathbf{k}_{i,n} + K \mathbf{v}_n + \tau K \sum_{j=1}^{s} a_{i,j} \mathbf{k}_{j,n} = \mathbf{f}_{i,n}, & \:\: i=1,\ldots,s, \;\;\: n=0,\ldots,n_t-1,
\end{array}
\end{equation}
where
\begin{displaymath}
(\mathbf{f}_{i,n})_m = \int_{\Omega} f(\mathbf{x}, t_n + c_i \tau) \phi_m \: \mathrm{d}\Omega, \quad i = 1, \ldots, s.
\end{displaymath}
Here, $K$ and $M$ are the \emph{stiffness} and \emph{mass} matrices, respectively. For a Dirichlet problem, both matrices are symmetric positive definite (s.p.d.). As we mentioned above, the boundary conditions on the stages $\mathbf{k}_{i,n}$ are given by the time derivatives of the corresponding boundary conditions on $v$. Specifically, we have
\begin{displaymath}
\begin{array}{c}
\left. (\mathbf{k}_{i,n}) \right| _{\partial \Omega} = \frac{\partial g}{\partial t} (\cdot, t_n + c_i \tau).
\end{array}
\end{displaymath}

By adopting an all-at-once approach, we can rewrite the system \eqref{RK_discretization_for_v_heat_equation}--\eqref{RK_stages_heat_equation} as follows:
\begin{displaymath}
\left\{
\begin{array}{ll}
M \mathbf{v}_0 = M \mathbf{v}^0,\\
M \mathbf{v}_{n+1} - M \mathbf{v}_n - \tau M \sum_{j=1}^{s} b_j \mathbf{k}_{j,n} = \mathbf{0}, & \quad n = 0, \ldots, n_t-1,\\
M \mathbf{k}_{i,n} + K \mathbf{v}_n + \tau K \sum_{j=1}^{s} a_{i,j} \mathbf{k}_{j,n} = \mathbf{f}_{i,n}, & \quad i=1,\ldots,s, \;\;\: n=0,\ldots,n_t-1.
\end{array}
\right.
\end{displaymath}
In matrix form, we have
\begin{equation}\label{system_all_at_once_RK_heat_equation}
\underbrace{\left[
\begin{array}{cc}
\Phi & \Psi_1\\
\Psi_2 & \Theta
\end{array}
\right]}_{\mathcal{A}}
\left[
\begin{array}{c}
\mathbf{v}_0\\
\mathbf{v}_1\\
\vdots\\
\mathbf{v}_{n_t}\\
\mathbf{k}_0\\
\vdots\\
\mathbf{k}_{n_t-1}
\end{array}
\right]=
\left[
\begin{array}{c}
\mathbf{v}^0\\
\mathbf{b}_1\\
\vdots\\
\mathbf{b}_{n_t}\\
\mathbf{f}_0\\
\vdots\\
\mathbf{f}_{n_t-1}
\end{array}
\right],
\end{equation}
where the vectors $\mathbf{b}_n$, $n=1,\ldots,n_t$, contain information about the boundary conditions. The blocks of the matrix $\mathcal{A}$ are given by
\begin{equation*}\label{blocks_all_at_once_RK_heat_equation}
\begin{array}{ll}
\vspace{1ex}
\!\!
\Phi \! = \! \left[
\!\!
\begin{array}{cccc}
M\\
-M & \ddots\\
 & \ddots & \ddots\\
 & & -M & M
\end{array}
\!\!
\right] \!,
&
\!\!
\Psi_1 \! = \! - \! \left[
\!\!
\begin{array}{ccc}
0\\
\tau \mathbf{b}_{\mathrm{RK}}^\top \otimes M \\
 & \ddots \\
 & & \tau \mathbf{b}_{\mathrm{RK}}^\top \otimes M
\end{array}
\!\!
\right] \!,\\
\!\!
\Psi_2 \! = \! \left[
\!\!
\begin{array}{cccc}
\mathbf{e} \otimes K\\
 & \ddots\\
 & & \mathbf{e} \otimes K & 0
\end{array}
\!\!
\right] \!,
&
\!\!
\Theta = I_{n_t} \otimes (I_s \otimes M + \tau A_{\mathrm{RK}}\otimes K) .
\end{array}
\end{equation*}

\subsection{Stokes equations}\label{sec_2_2}
Given a domain $\Omega \subset \mathbb{R}^d$, with $d=2,3$, and a final time $t_f>0$, we consider the following Stokes equations:
\begin{equation}\label{Stokes_equation}
\left\{
\begin{array}{rl}
\vspace{0.25ex}
\frac{\partial \vec{v}}{\partial t} - \nabla^2 \vec{v} + \nabla {p} = \vec{f}(\mathbf{x},t) & \quad \mathrm{in} \; \Omega \times (0,t_{f}), \\
\vspace{0.25ex}
- \nabla \cdot \vec{v} = 0 & \quad \mathrm{in} \; \Omega \times (0,t_{f}), \\
\vspace{0.25ex}
\vec{v}(\mathbf{x},t) = \vec{g}(\mathbf{x},t) & \quad \mathrm{on} \; \partial \Omega \times (0,t_{f}),\\
\vec{v}(\mathbf{x},0) = \vec{v}_0(\mathbf{x}) & \quad \mathrm{in} \; \Omega.

\end{array}
\right.
\end{equation}
As above, the functions $\vec{f}$ and $\vec{g}$ as well as the initial condition $\vec{v}_0(\mathbf{x})$ are known.

After dividing the time interval $[0,t_f]$ into $n_t$ subintervals, the discretization of \eqref{Stokes_equation} by a Runge--Kutta method reads as follows:
\begin{equation}\label{RK_discretization_for_vp_Stokes_equations}
\begin{array}{ll}
\vspace{0.25ex}
M_v \mathbf{v}_{n+1} = M_v \mathbf{v}_n + \tau M_v \sum_{i=1}^{s} b_i \mathbf{k}^{v}_{i,n}, & \quad n = 0, \ldots, n_t-1,\\
M_p \mathbf{p}_{n+1} = M_p \mathbf{p}_n + \tau M_p \sum_{i=1}^{s} b_i \mathbf{k}^{p}_{i,n}, & \quad n = 0, \ldots, n_t-1,
\end{array}
\end{equation}
with $M_v \mathbf{v}_0 = M_v \mathbf{v}^0$ a suitable discretization of the initial condition, and $M_p \mathbf{p}_0 = M_p \mathbf{p}^0$ a suitable approximation of the pressure $p$ at time $t=0$. The stages $\mathbf{k}^{v}_{i,n}$ and $\mathbf{k}^{p}_{i,n}$ are defined as follows:
\begin{equation}\label{RK_stages_Stokes_equations}
\begin{array}{ll}
\vspace{0.25ex}
M_v \mathbf{k}^{v}_{i,n} + K_v \mathbf{v}_n + \tau K_v \sum_{j=1}^{s} a_{i,j} \mathbf{k}^{v}_{j,n} + B^\top \mathbf{p}_n + \tau B^\top \sum_{j=1}^{s} a_{i,j} \mathbf{k}^{p}_{j,n} = \mathbf{f}_{i,n}, \\
B \mathbf{v}_n + \tau B \sum_{j=1}^{s} a_{i,j} \mathbf{k}^{v}_{j,n} = \mathbf{0},
\end{array}
\end{equation}
for $i=1,\ldots,s$ and $n=0,\ldots,n_t-1$, where
\begin{displaymath}
(\mathbf{f}_{i,n})_m = \int_{\Omega} \vec{f}(\mathbf{x}, t_n + c_i \tau) \cdot \vec{\phi}_m \: \mathrm{d}\Omega, \quad i = 1, \ldots, s.
\end{displaymath}
Here, $K_v$ and $M_v$ (resp., $K_p$ and $M_p$) are the \emph{vector-stiffness} and \emph{vector-mass} matrices (resp., stiffness and mass), respectively. Finally, as above, the boundary conditions on $\mathbf{k}^{v}_{i,n}$ are given by
\begin{displaymath}
\begin{array}{c}
\left.(\mathbf{k}^{v}_{i,n})\right| _{\partial \Omega} = \frac{\partial \vec{g}}{\partial t} (\cdot, t_n + c_i \tau).
\end{array}
\end{displaymath}

By adopting an all-at-once approach, we can rewrite the system \eqref{RK_discretization_for_vp_Stokes_equations}--\eqref{RK_stages_Stokes_equations} as follows:
\begin{displaymath}
\left\{
\begin{array}{ll}
\!\! M_v \mathbf{v}_0 = M_v \mathbf{v}^0,\\
\!\! M_p \mathbf{p}_0 = M_p \mathbf{p}^0,\\
\!\! M_v \mathbf{v}_{n+1} - M_v \mathbf{v}_n - \tau M_v \sum_{j=1}^{s} b_j \mathbf{k}^{v}_{j,n} = \mathbf{0}, & \\
\!\! M_p \mathbf{p}_{n+1} - M_p \mathbf{p}_n - \tau M_p \sum_{i=1}^{s} b_i \mathbf{k}^{p}_{i,n} = \mathbf{0}, & \\
\!\! M_v \mathbf{k}^{v}_{i,n} \! + \! K_v (\mathbf{v}_n \! + \! \tau \sum_{j=1}^{s} a_{i,j} \mathbf{k}^{v}_{j,n}) \! + \! B^\top  \! (\mathbf{p}_n \! + \! \tau \sum_{j=1}^{s} a_{i,j} \mathbf{k}^{p}_{j,n}) \! = \! \mathbf{f}_{i,n}, & \! i=1,\ldots,s,\\
\!\! B \mathbf{v}_n + \tau B \sum_{j=1}^{s} a_{i,j} \mathbf{k}^{v}_{j,n} = \mathbf{0}, & \! i=1,\ldots,s,
\end{array}
\right.
\end{displaymath}
for $n=0,\ldots,n_t-1$. In matrix form, the system is of the form \eqref{system_all_at_once_RK}, with
\begin{displaymath}
\begin{array}{l}
\vspace{0.3ex}
\mathbf{v}=[\mathbf{v}_0^\top,\mathbf{p}_0^\top, \ldots, \mathbf{v}_{n_t}^\top,\mathbf{p}_{n_t}^\top]^\top,\\
\mathbf{k}=[(\mathbf{k}^v_{0})^\top, (\mathbf{k}^p_{0})^\top, \ldots, (\mathbf{k}^v_{n_t-1})^\top, \ldots, (\mathbf{k}^p_{n_t-1})^\top]^\top,
\end{array}
\end{displaymath}
where
\begin{displaymath}
\begin{array}{c}
\mathbf{k}^v_{n} = [(\mathbf{k}^v_{n,1})^\top, \ldots, (\mathbf{k}^v_{n,s})^\top]^\top, \quad \mathbf{k}^p_{n} = [(\mathbf{k}^p_{n,1})^\top, \ldots, (\mathbf{k}^p_{n,s})^\top]^\top, \quad n=0, \ldots, n_t-1.
\end{array}
\end{displaymath}
Further, the blocks of the matrix $\mathcal{A}$ are as in \eqref{blocks_all_at_once_RK}, with $\Theta = I_{n_t} \otimes \widehat{\Theta}$, and
\begin{displaymath}
\begin{array}{ll}
\Psi_2 = \left[
\begin{array}{cccc}
\widehat{\Psi}_2\\
 & \ddots\\
 & & \widehat{\Psi}_2 & 0
\end{array}
\right],
&
\quad
\mathbf{M} = \left[
\begin{array}{cc}
M_v & 0\\
0 & M_p
\end{array}
\right].
\end{array}
\end{displaymath}
Here, the blocks are given by
\begin{equation}\label{system_for_stages_Stokes}
\begin{array}{cc}
\!\!
\widehat{\Psi}_2 \! = \! \left[
\!\!
\begin{array}{cc}
\mathbf{e} \otimes K_v & \mathbf{e} \otimes B^\top\\
\mathbf{e} \otimes B & 0
\end{array}
\!\!
\right], &
\!\!
\widehat{\Theta} \! = \! \left[
\!\!
\begin{array}{cc}
I_s \otimes M_v + \tau A_{\mathrm{RK}}\otimes K_v & \tau A_{\mathrm{RK}}\otimes B^\top\\
\tau A_{\mathrm{RK}}\otimes B & 0
\end{array}
\!\!
\right].
\end{array}
\end{equation}
Note that we need an approximation of the pressure $p$ at time $t=0$. In our tests, before solving for the all-at-once system we integrate the problem between $(0, \epsilon_{\mathrm{BE}})$ employing one step of backward Euler. The approximations of the velocity $\vec{v}$ and pressure $p$ at time $t=\epsilon_\mathrm{BE}$ are then employed as initial conditions for solving the problem in $(\epsilon_\mathrm{BE},t_f)$. In our tests, we choose $\epsilon_{\mathrm{BE}}=h^{2.5}$, with $h$ the mesh size in space. This choice has been made in order to achieve an accurate enough solution at time $t=\epsilon_{\mathrm{BE}}$ and a fast solver for the backward Euler discretization. We would like to mention that finding suitable initial conditions for the problem \eqref{RK_discretization_for_vp_Stokes_equations}--\eqref{RK_stages_Stokes_equations} is beyond the scope of this work, and one can employ other approaches. For instance, one can employ a \emph{solenoidal projection}, as done in \cite{Hinze_Koster_Turek}.

\section{Preconditioner}\label{sec_3}
In what follows, we denote with $\sigma(\cdot)$ the spectrum of a given matrix.

Supposing that $\Theta$ is invertible, we consider as a preconditioner for the system \eqref{system_all_at_once_RK} the following matrix:
\begin{equation}\label{preconditioner_all_at_once}
\mathcal{P}=\left[
\begin{array}{cc}
S & \Psi_1\\
0 & \Theta
\end{array}
\right],
\end{equation}
where $S=-\Phi-\Psi_1 \Theta^{-1}\Psi_2$ is the Schur complement, with $\Phi$, $\Psi_1$, $\Psi_2$, and $\Theta$ defined as in \eqref{blocks_all_at_once_RK}. Specifically, we have
\begin{displaymath}
S=-\left[
\begin{array}{cccc}
\mathbf{M}\\
-\mathbf{M}+X & \ddots\\
& \ddots & \ddots\\
& & -\mathbf{M}+X & \mathbf{M}
\end{array}
\right],
\end{displaymath}
where
\begin{displaymath}
X= \tau \left[
\begin{array}{ccc}
b_1 \mathbf{M} & \ldots & b_s \mathbf{M}\\
\end{array}
\right] \left[
\begin{array}{c}
I_s \otimes \widehat{\mathbf{M}} + \tau A_{\mathrm{RK}}\otimes \mathbf{D}_1\\
\tau A_{\mathrm{RK}}\otimes \mathbf{D}_2
\end{array}
\right]^{-1}
\left[
\begin{array}{c}
\mathbf{e} \otimes \mathbf{D}_1\\
\mathbf{e} \otimes \mathbf{D}_2
\end{array}
\right].
\end{displaymath}
The preconditioner $\mathcal{P}$ given in \eqref{preconditioner_all_at_once} is optimal. In fact, supposing also that $S$ is invertible, one can prove that $\sigma(\mathcal{P}^{-1}\mathcal{A})=\left\{1 \right\}$, and the minimal polynomial of the preconditioned matrix has degree 2, see, for instance, \cite{Ipsen01, Murphy:1999:NPI:359189.359190}. For this reason, when employing the preconditioner $\mathcal{P}$, an appropriate iterative method should converge in at most two iterations (in exact arithmetic). However, in practical applications even forming the Schur complement $S$ may be unfeasible due to the large dimensions of the system. Besides, the block $\Theta$ may be singular, in which case not only is the Schur complement $S$ not well defined, but also we cannot apply the inverse of $\mathcal{P}$. For this reason, rather than solving for the matrix $\mathcal{P}$, we favour finding a cheap invertible approximation $\widetilde{\mathcal{P}}$ of $\mathcal{P}$, in which the block $\Theta$ is replaced by an invertible $\widetilde{\Theta}$ and the Schur complement $S$ is approximated by $\widetilde{S}\approx -\Phi-\Psi_1 \widetilde{\Theta}^{-1}\Psi_2$, respectively. In what follows, we will find approximations of the main blocks of $\mathcal{P}$, focusing mainly on parallelizable approximations.

Clearly, the matrix $\Theta$ defined in \eqref{blocks_all_at_once_RK} is block-diagonal, with each diagonal block given by the system for the stages. Therefore, a cheap method for approximately inverting the system for the stages gives also a cheap way for approximately inverting the block $\Theta$. Note that the latter may be performed in parallel over all the time steps $n_t$.

We now focus on an approximation of the Schur complement $S$. The latter may be factorized as follows:
\begin{equation}\label{widehat_S}
S=-\left[
\begin{array}{cccc}
\mathbf{M}\\
 & \ddots\\
 &  & \mathbf{M}
\end{array}
\right]
\underbrace{\left[
\begin{array}{cccc}
I_{n_x}\\
-I_{n_x}+\widehat{X} & \ddots\\
& \ddots & \ddots\\
& & -I_{n_x}+\widehat{X} & I_{n_x}
\end{array}
\right]}_{\widehat{S}},
\end{equation}
where
\begin{equation}\label{widehat_X}
\widehat{X}= \tau \left[
\begin{array}{ccc}
b_1 I_{n_x} & \ldots & b_s I_{n_x}\\
\end{array}
\right] \left[
\begin{array}{c}
I_s \otimes \widehat{\mathbf{M}} + \tau A_{\mathrm{RK}}\otimes \mathbf{D}_1\\
\tau A_{\mathrm{RK}}\otimes \mathbf{D}_2
\end{array}
\right]^{-1}
\left[
\begin{array}{c}
\mathbf{e} \otimes \mathbf{D}_1\\
\mathbf{e} \otimes \mathbf{D}_2
\end{array}
\right].
\end{equation}
A parallel solve for $S$ may be performed if we have a parallel solve for $\widehat{S}$. The latter may be done by employing an MGRIT routine \cite{Falgout_Friedhoff_Kolev_MacLachlan_Schroder, Friedhoff_Falgout_Kolev_MacLachlan_Schroder}, for example. In our tests, we employ the XBraid v3.0.0 routine \cite{XBraid}.

As we mentioned, the main computational task is to (approximately) solve for the linear system of the stages. In the following, we present the strategy adopted in this work. Again, we would like to mention that one may also employ other solvers for the stages, providing their optimality.

In what follows, we will assume that the matrix $A_{\mathrm{RK}}$ is invertible.

\subsection{Preconditioner for the stages}\label{sec_3_1}
As we mentioned above, an all-at-once solve for a Runge--Kutta discretization in time may be performed only if one has an optimal preconditioner for the system of the stages
\begin{displaymath}
\underbrace{\left[
\begin{array}{c}
I_s \otimes \widehat{\mathbf{M}} + \tau A_{\mathrm{RK}}\otimes \mathbf{D}_1\\
\tau A_{\mathrm{RK}}\otimes \mathbf{D}_2
\end{array}
\right]}_{\widehat{\Theta}}
\left[
\begin{array}{c}
\mathbf{k}_{1,n}\\
\vdots\\
\mathbf{k}_{s,n}
\end{array}
\right]=
\left[
\begin{array}{c}
\mathbf{b}_{1}\\
\vdots\\
\mathbf{b}_{s}
\end{array}
\right].
\end{displaymath}

In order to derive a preconditioner for the matrix $\widehat{\Theta}$, we consider a SVD decomposition of the matrix $A_{\mathrm{RK}}=U \Sigma V^*$, where $U$ and $V$ are unitary matrices whose columns are the left and right singular vectors of $A_{\mathrm{RK}}$, respectively, and $\Sigma$ is a diagonal matrix with entries equal to the singular values of $A_{\mathrm{RK}}$. Note that this decomposition is not unique. Note also that since the matrix $A_{\mathrm{RK}}$ is real, the matrices $U$ and $V$ can be chosen to be real \cite[Section\,2.4]{Golub_van_Loan}, therefore they are properly orthogonal matrices. For this reason, we can write $A_{\mathrm{RK}}=U \Sigma V^\top$. From here, we can write
\begin{align*}
I_s \otimes \widehat{\mathbf{M}} + \tau A_{\mathrm{RK}}\otimes \mathbf{D}_1 &  = I_s \otimes \widehat{\mathbf{M}} + \tau (U \Sigma V^\top) \otimes \mathbf{D}_1\\
& = ( U \otimes I_{\hat{n}_x} ) [ (U^\top V) \otimes \widehat{\mathbf{M}} + \tau \Sigma \otimes \mathbf{D}_1 ]
\!
\left[
\!\!
\begin{array}{cc}
V^\top \otimes I_{\hat{n}_x} & \!\!\! 0 \\
0 & \!\!\! V^\top \otimes I_{\hat{m}_x}
\end{array}
\!\!\!
\right],
\end{align*}
with $\hat{n}_x+\hat{m}_x=n_x$. Note that, since the matrices $U$ and $V$ are orthogonal, the same holds for the matrix $U^\top V$. In particular, the eigenvalues of the matrix $U^\top V$ all lie on the unit circle centered at the origin of the complex plane, and its eigenvectors are mutually orthogonal. Since the eigenvalues have all absolute value equal to $1$, we can derive the following approximation:
\begin{displaymath}
\mathcal{P}_{\mathrm{RK}}:=
\left[
\!\!
\begin{array}{cc}
U \otimes I_{\hat{n}_x} & \!\! 0 \\
0 & \!\! U \otimes I_{\hat{m}_x} 
\end{array}
\!\!
\right]
\left[
\!\!
\begin{array}{c}
I_s \otimes \widehat{\mathbf{M}} + \tau \Sigma \otimes \mathbf{D}_1\\
\tau \Sigma \otimes \mathbf{D}_2
\end{array}
\!\!
\right]
\left[
\!\!
\begin{array}{cc}
V^\top \otimes I_{\hat{n}_x} & \!\! 0 \\
0 & \!\! V^\top \otimes I_{\hat{m}_x}
\end{array}
\!\!\!
\right] \approx \widehat{\Theta}.
\end{displaymath}
This approximation can be employed as a preconditioner for the matrix $\widehat{\Theta}$ within the GMRES algorithm derived in \cite{Saad_Schultz}. Note that, excluding the effect of the inverses of the matrices $U \otimes I_{\hat{n}_x}$, $U \otimes I_{\hat{m}_x}$, $V^\top \otimes I_{\hat{n}_x}$, and $V^\top \otimes I_{\hat{m}_x}$ (which require only a matrix--vector product), the $(1,1)$-block of the preconditioner is block-diagonal, therefore its application can be done in parallel (we will discuss in more detail how to deal with the other blocks of the preconditioner in the case of the Stokes equations). Further, the matrices are all real, and we are not forced to work in complex arithmetic. Finally, we would like to mention that, compared to an eigendecomposition, by employing this strategy one is able to avoid possibly ill-conditioned matrices arising from the eigenvectors, for example. Despite the above properties holding, one cannot expect the approximation $\mathcal{P}_{\mathrm{RK}}$ to be completely robust. In fact, the matrix $U^\top V$ has $s$ distinct eigenvalues, therefore we expect the preconditioned matrix $\mathcal{P}_{\mathrm{RK}}^{-1}\widehat{\Theta}$ to have $s$ clusters of eigenvalues. Nonetheless, we expect the preconditioner to work robustly at least with respect to the mesh size $h$.

Below, we will show how to employ the preconditioner $\mathcal{P}_{\mathrm{RK}}$ for solving for the matrix $\widehat{\Theta}$.

\subsubsection{Heat equation}\label{sec_3_1_1}
Before specifying our strategy for the heat equation, we would like to introduce other preconditioners employed for solving for the stages of a Runge--Kutta discretization for this problem.

In \cite{Mardal_Nilssen_Staff}, the authors approximate the matrix $I_s \otimes M + \tau A_{\mathrm{RK}}\otimes K$ with a fixed number of GMRES iteration, preconditioned with the following matrix:
\begin{displaymath}
\mathcal{P}_{\mathrm{MNS}}=
\left[
\begin{array}{ccc}
M + \tau a_{1,1} K \\
& \ddots\\
& & M+\tau a_{s,s}K
\end{array}
\right].
\end{displaymath}
The preconditioner $\mathcal{P}_{\mathrm{MNS}}$ is optimal, in the sense that it can be proved that the condition number of the preconditioned system $\mathcal{P}_{\mathrm{MNS}}^{-1}(I_s \otimes M + \tau A_{\mathrm{RK}}\otimes K)$ is independent of the time-step $\tau$ and the mesh size $h$, see \cite{Mardal_Nilssen_Staff}. However, numerical experiments show that the condition number may be dependent on the number of stages $s$, see \cite{Mardal_Nilssen_Staff}. We would like to mention that other approaches may be employed, since the parallel solver we propose for the all-at-once system is mainly based on a solver for the system of the stages of a Runge--Kutta method. For instance, in \cite{Staff_Mardal_Nilssen} the authors employ as a preconditioner the block-lower triangular part of the matrix $I_s \otimes M + \tau A_{\mathrm{RK}}\otimes K$, obtaining more robustness with respect to the number of stages $s$. Alternatively, one may employ the strategies described, for instance, in \cite{AbuLabdeh_MachLachlan_Farrell, Axelsson_Dravins_Neytcheva, Rana_Howle_Long_Meek_Milestone} as a preconditioner for the linear system considered.

In the numerical tests below, we compare our preconditioner $\mathcal{P}_{\mathrm{RK}}$ only with the preconditioner $\mathcal{P}_{\mathrm{MNS}}$. This is done for various reasons. In fact, although the methods presented in \cite{Staff_Mardal_Nilssen, Rana_Howle_Long_Meek_Milestone} are robust, the preconditioners are not fully parallelizable as they are presented, as one needs to solve for a block-lower triangular matrix. On the other hand, as discussed in \cite{Axelsson_Dravins_Neytcheva}, a preconditioner based on the diagonalization of the matrix $A_{\mathrm{RK}}$ makes use of complex arithmetic, therefore, although completely parallelizable, the strategy requires one to solve for systems twice the dimension of each block in order to work with real arithmetic.

We can now describe the preconditioner $\mathcal{P}_{\mathrm{RK}}$ employed for solving for the stages of the discretization of the heat equation. The system for the stages is given by
\begin{displaymath}
I_s \otimes M + \tau A_{\mathrm{RK}}\otimes K.
\end{displaymath}
In order to solve for this matrix, we employ GMRES with the following preconditioner:
\begin{equation}\label{prec_heat_eqaution}
\mathcal{P}_{\mathrm{RK}}= ( U \otimes I_{n_x} ) (I_s \otimes M + \tau \Sigma \otimes K) ( V^\top \otimes I_{n_x} ).
\end{equation}

The following theorem provides the location of the eigenvalues of the preconditioned system, under an assumption which frequently holds. More specifically, we require that the real part of the Rayleigh quotient $\frac{\mathbf{x}^* (U^\top V)\mathbf{x}}{\mathbf{x}^* \mathbf{x}}$ is positive, for any $\mathbf{x}\in \mathbb{C}^s$ with $\mathbf{x}\neq \mathbf{0}$. Note that, since the matrix $U^\top V$ is orthogonal (in particular, it is normal), this is equivalent to saying that the real part of the eigenvalues of the matrix $U^\top V$ is positive, since the set $\left\{\frac{\mathbf{x}^* (U^\top V)\mathbf{x}}{\mathbf{x}^* \mathbf{x}} | \mathbf{x}\in \mathbb{C}^s, \, \mathbf{x}\neq \mathbf{0} \right\}$ describes the \emph{field of values} of the matrix $U^\top V$, and it represents the convex hull that contains the eigenvalues of this matrix. Note also that, again with $U^\top V$ orthogonal, our assumption is equivalent to saying that the ratio $\frac{\mathbf{x}^* (U^\top V)\mathbf{x}}{\mathbf{x}^* \mathbf{x}}$, with $\mathbf{x}\in \mathbb{C}^s \setminus \left\{\mathbf{0}\right\}$, is contained within the right-half of the unit circle centered at the origin of the complex plane. For these as well as other results on the field of values, we recommend the book \cite{Horn_Johnson}.

\begin{remark}\label{remark_SVD}
Before moving to the statement and the proof of the eigenvalue result for the preconditioner we adopt, we would like to discuss the assumption we make. As we mentioned above, the SVD is not unique. However, under the assumption of $A_{\mathrm{RK}}$ being invertible, the field of values of the product $U^\top V$ of the matrices containing the singular vectors is uniquely defined. In fact, an invertible matrix $A_{\mathrm{RK}}$ has a unique polar decomposition $A_{\mathrm{RK}}=\widehat{U} P$, with $\widehat{U}$ unitary and $P$ Hermitian positive-definite, see \cite[Theorem\,2.17]{Hall}. Therefore, given $A_{\mathrm{RK}}= U \Sigma V^\top$ as a SVD of the matrix $A_{\mathrm{RK}}$, we clearly have $P = V \Sigma V^\top$ and $\widehat{U}=U V^\top$, with the latter implying that $U^\top V = V^\top \widehat{U}^\top V$. Finally, from the previous expression and the uniqueness of the matrix $\widehat{U}$, we can derive that the field of values of $U^\top V$ is uniquely defined.
\end{remark}

\begin{theorem}\label{optimality_preconditioner}
Let $A_{\mathrm{RK}}$ be the matrix representing the coefficients of a Runge--Kutta method. Let $A_{\mathrm{RK}}=U \Sigma V^\top$ be a SVD of the matrix $A_{\mathrm{RK}}$. Suppose that the real part of the Rayleigh quotient $\frac{\mathbf{x}^* (U^\top V)\mathbf{x}}{\mathbf{x}^* \mathbf{x}}$ is positive, for any $\mathbf{x}\in \mathbb{C}^s \setminus\left\{\mathbf{0}\right\}$. Then, the eigenvalues of the matrix $\mathcal{P}_{\mathrm{RK}}^{-1}(I_s \otimes M + \tau A_{\mathrm{RK}}\otimes K)$ all lie in the right-half of the unit circle centered at the origin of the complex plane.
\end{theorem}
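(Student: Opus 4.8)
The plan is to diagonalize the preconditioned matrix as much as possible using the tensor structure, reduce the problem to a one-parameter family of $s \times s$ matrices, and then invoke the field-of-values hypothesis on $U^\top V$. First I would write $\mathcal{P}_{\mathrm{RK}}^{-1}(I_s \otimes M + \tau A_{\mathrm{RK}} \otimes K)$ explicitly. Using $A_{\mathrm{RK}} = U\Sigma V^\top$ and the identity from the excerpt,
\begin{displaymath}
I_s \otimes M + \tau A_{\mathrm{RK}} \otimes K = (U \otimes I_{n_x})\bigl[(U^\top V)\otimes M + \tau \Sigma \otimes K\bigr](V^\top \otimes I_{n_x}),
\end{displaymath}
while $\mathcal{P}_{\mathrm{RK}} = (U \otimes I_{n_x})(I_s \otimes M + \tau \Sigma \otimes K)(V^\top \otimes I_{n_x})$. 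Hence
\begin{displaymath}
\mathcal{P}_{\mathrm{RK}}^{-1}(I_s \otimes M + \tau A_{\mathrm{RK}} \otimes K) = (V \otimes I_{n_x})(I_s \otimes M + \tau \Sigma \otimes K)^{-1}\bigl[(U^\top V)\otimes M + \tau \Sigma \otimes K\bigr](V^\top \otimes I_{n_x}),
\end{displaymath}
so up to the similarity transformation by $V^\top \otimes I_{n_x}$ the spectrum equals that of $(I_s \otimes M + \tau \Sigma \otimes K)^{-1}\bigl[(U^\top V)\otimes M + \tau \Sigma \otimes K\bigr]$.

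Next I would exploit that $M$ and $K$ are simultaneously diagonalizable in the sense of the generalized eigenvalue problem $K\mathbf{w} = \mu M \mathbf{w}$: since $M$ is s.p.d.\ and $K$ is symmetric, there is an invertible $W$ with $W^\top M W = I_{n_x}$ and $W^\top K W = \Lambda = \mathrm{diag}(\mu_1,\dots,\mu_{n_x})$, $\mu_j > 0$. Conjugating by $I_s \otimes W$ (which commutes with all the $I_s$- and $\Sigma$-factors in the obvious way) turns $I_s \otimes M + \tau \Sigma \otimes K$ into $I_s \otimes I_{n_x} + \tau \Sigma \otimes \Lambda$ and $(U^\top V)\otimes M + \tau \Sigma \otimes K$ into $(U^\top V)\otimes I_{n_x} + \tau \Sigma \otimes \Lambda$. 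After a permutation that groups the $s$ copies associated with each fixed $\mu_j$, the matrix becomes block-diagonal with blocks
\begin{displaymath}
T_j := (I_s + \tau \mu_j \Sigma)^{-1}\bigl(U^\top V + \tau \mu_j \Sigma\bigr), \qquad j = 1,\dots,n_x,
\end{displaymath}
where $\tau\mu_j > 0$. So it suffices to show that every eigenvalue of $T_j$ lies in the right half of the open unit disk, for every $\gamma := \tau\mu_j > 0$; that is, for the family $T(\gamma) = (I_s + \gamma\Sigma)^{-1}(Q + \gamma\Sigma)$ with $Q := U^\top V$ orthogonal, $\Sigma \succ 0$ diagonal, $\gamma > 0$.

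For a fixed $\gamma$, let $\lambda$ be an eigenvalue of $T(\gamma)$ with eigenvector $\mathbf{x} \neq \mathbf{0}$, so $(Q + \gamma\Sigma)\mathbf{x} = \lambda(I_s + \gamma\Sigma)\mathbf{x}$, i.e.\ $Q\mathbf{x} = \lambda \mathbf{x} + (\lambda - 1)\gamma\Sigma\mathbf{x}$. Taking the inner product with $\mathbf{x}$ and dividing by $\mathbf{x}^*\mathbf{x}$ gives $q = \lambda + (\lambda-1)\gamma\rho$, where $q := \frac{\mathbf{x}^* Q\mathbf{x}}{\mathbf{x}^*\mathbf{x}}$ lies in the field of values of $Q$ and $\rho := \frac{\mathbf{x}^*\Sigma\mathbf{x}}{\mathbf{x}^*\mathbf{x}} > 0$ since $\Sigma \succ 0$. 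Solving, $\lambda = \dfrac{q + \gamma\rho}{1 + \gamma\rho}$, a convex combination $\lambda = (1-t)q + t\cdot 1$ with $t = \frac{\gamma\rho}{1+\gamma\rho} \in (0,1)$. By hypothesis $\mathrm{Re}(q) > 0$ and $|q| \le 1$ (field of values of an orthogonal, hence normal, matrix is the convex hull of eigenvalues, all of modulus $1$), so $q$ lies in the right half of the closed unit disk, which is convex and contains $1$; therefore the convex combination $\lambda$ lies in the right half of the closed unit disk, and since $t < 1$ while $\mathrm{Re}(q) > 0$ strictly, one gets $\mathrm{Re}(\lambda) > 0$. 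I would also check $|\lambda| < 1$: write $|\lambda| \le (1-t)|q| + t \le (1-t) + t = 1$, with strict inequality unless $|q| = 1$ and $q = 1$ simultaneously — and if $q = 1$ then $\lambda = 1$, which is on the boundary; here a slightly more careful argument is needed to get the strict ``$< 1$'' claimed (the open unit circle). This is the one delicate point: the map $z \mapsto (1-t)z + t$ sends the closed unit disk into itself but only the case $q = 1$ touches the boundary, and one must argue either that $q = 1$ forces $\mathbf{x}$ to be an eigenvector of $Q$ with eigenvalue $1$ — then $\lambda = 1$, still in the closed disk but not the open one — or else reinterpret the theorem's ``unit circle'' as the closed disk. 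I expect the main obstacle to be exactly this boundary bookkeeping (the statement literally says ``right-half of the unit circle''): the clean content is the convex-combination identity $\lambda = (1-t)q + t$, and the rest is deciding how tightly to state strictness, for which it is enough to note that $\mathrm{Re}(q)>0$ yields $\mathrm{Re}(\lambda)>0$ and $|q|\le 1,\ t\in(0,1)$ yields $|\lambda|\le 1$, with $|\lambda|=1$ only if $q=\lambda=1$.
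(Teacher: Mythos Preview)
Your proposal is correct and follows essentially the same approach as the paper: both reduce via the SVD to the generalized eigenproblem $((U^\top V)\otimes M + \tau\Sigma\otimes K)\mathbf{y} = \lambda(I_s\otimes M + \tau\Sigma\otimes K)\mathbf{y}$, express the eigenvalue as $\lambda = (q+r)/(1+r)$ with $q$ in the field of values of $U^\top V$ and $r\ge 0$ real, and read off $\mathrm{Re}(\lambda)>0$ and $|\lambda|\le 1$; the only cosmetic difference is that you simultaneously diagonalize the pencil $(K,M)$ and block-diagonalize down to $s\times s$ blocks indexed by $\mu_j$, whereas the paper symmetrizes with $M^{1/2}$ and works directly with a single $sn_x$-dimensional Rayleigh quotient. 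Your caution about the boundary case is well placed --- the paper itself proves only $|\lambda|\le 1$ (not strict), so ``right-half of the unit circle'' is to be read as the closed right half of the closed unit disk.
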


\begin{proof}
Let $\lambda$ be an eigenvalue of the matrix $\mathcal{P}_{\mathrm{RK}}^{-1}(I_s \otimes M + \tau A_{\mathrm{RK}}\otimes K)$, with $\mathbf{x}$ the corresponding eigenvector. Then, we have
\begin{displaymath}
(I_s \otimes M + \tau A_{\mathrm{RK}}\otimes K) \mathbf{x} = \lambda \mathcal{P}_{\mathrm{RK}} \mathbf{x}.
\end{displaymath}
By employing the SVD of the matrix $A_{\mathrm{RK}}$ and a well known property of the Kronecker product, we can write
\begin{displaymath}
(U \otimes I_{n_x})((U^\top V) \otimes M + \tau \Sigma \otimes K)(V^\top \otimes I_{n_x}) \mathbf{x} = \lambda \mathcal{P}_{\mathrm{RK}} \mathbf{x}.
\end{displaymath}

From \eqref{prec_heat_eqaution}, by setting $\mathbf{y}=(V^\top \otimes I_{n_x}) \mathbf{x}$, the previous expression is equivalent to
\begin{displaymath}
((U^\top V) \otimes M + \tau \Sigma \otimes K) \mathbf{y} = \lambda (I_s \otimes M + \tau \Sigma \otimes K) \mathbf{y}.
\end{displaymath}
Recalling that the matrix $M$ is s.p.d., we can write $M^{\frac{1}{2}}$. Then, we have
\begin{equation}	\label{GEP}
((U^\top V) \otimes I_{n_x} + \tau \Sigma \otimes (M^{-\frac{1}{2}}KM^{-\frac{1}{2}})) \mathbf{z} = \lambda (I_s \otimes I_{n_x} + \tau \Sigma \otimes (M^{-\frac{1}{2}}KM^{-\frac{1}{2}})) \mathbf{z},
\end{equation}
where $\mathbf{z} = (I_s \otimes M^{\frac{1}{2}}) \mathbf{y}$.

Since $\Sigma$ is s.p.d. and $K$ is symmetric positive semi-definite, we have that the matrix $I_s \otimes I_{n_x} + \tau \Sigma \otimes (M^{-\frac{1}{2}}KM^{-\frac{1}{2}})$ is s.p.d., therefore invertible. Thus, we can consider the generalized Rayleigh quotient
\begin{eqnarray}
\lambda & = \dfrac{\mathbf{z}^* ((U^\top V) \otimes I_{n_x} + \tau \Sigma \otimes (M^{-\frac{1}{2}}KM^{-\frac{1}{2}})) \mathbf{z}}{\mathbf{z}^* (I_s \otimes I_{n_x} + \tau \Sigma \otimes (M^{-\frac{1}{2}}KM^{-\frac{1}{2}})) \mathbf{z}} \nonumber\\
 & = \dfrac{1}{1 + \tau \hat{\lambda}}\left(\dfrac{\mathbf{z}^* ((U^\top V) \otimes I_{n_x})\mathbf{z}}{\mathbf{z}^* \mathbf{z}} + \tau \hat{\lambda} \right) \label{lambda_heat_equation},
\end{eqnarray}
with $\hat{\lambda}= \frac{\mathbf{z}^* (\Sigma \otimes (M^{-\frac{1}{2}}KM^{-\frac{1}{2}})) \mathbf{z}}{\mathbf{z}^* \mathbf{z}}$.

Again, since the matrix $\Sigma \otimes (M^{-\frac{1}{2}}KM^{-\frac{1}{2}})$ is symmetric positive semi-definite, the value $\hat{\lambda}$ is real and non-negative. Then, since $\tau>0$, we have $1 + \tau \hat{\lambda} \geq 1$, and $\frac{\tau \hat{\lambda}}{1 + \tau \hat{\lambda}}\leq 1$.

Denoting here with $i$ the imaginary unit, under our assumption we can write
\begin{displaymath}
\dfrac{\mathbf{z}^* ((U^\top V) \otimes I_{n_x})\mathbf{z}}{\mathbf{z}^* \mathbf{z}} = a + i b,
\end{displaymath}
with $0 < a \leq 1$, and $b\in (-1,1)$ such that $a^2+b^2\leq 1$, due to $U^\top V$ being orthogonal. In fact, we have
\begin{displaymath}
\dfrac{\mathbf{z}^* ((U^\top V) \otimes I_{n_x})\mathbf{z}}{\mathbf{z}^* \mathbf{z}}=\dfrac{\hat{\mathbf{z}}^* ( I_{n_x} \otimes (U^\top V))\hat{\mathbf{z}}}{\hat{\mathbf{z}}^* \hat{\mathbf{z}}},
\end{displaymath}
with $\hat{\mathbf{z}}=P \mathbf{z}$ for a suitable permutation $P$. In particular, we have that the fields of values $\left\{\frac{\mathbf{x}^* (U^\top V)\mathbf{x}}{\mathbf{x}^* \mathbf{x}} | \mathbf{x}\in \mathbb{C}^s, \, \mathbf{x}\neq \mathbf{0} \right\}$ and $\left\{\frac{\mathbf{z}^* ((U^\top V) \otimes I_{n_x})\mathbf{z}}{\mathbf{z}^* \mathbf{z}} | \mathbf{z}\in \mathbb{C}^{sn_x}, \, \mathbf{z}\neq \mathbf{0} \right\}$ describe the same subset of the complex plane. Since $a>0$, from \eqref{lambda_heat_equation} we can say that the real part of $\lambda$ is greater than $0$. Finally, from $a^2+b^2\leq 1$ and \eqref{lambda_heat_equation} we can derive that the absolute value of $\lambda$ is less or equal than $1$, that is, $\lambda$ lies in the right-half of the unit circle centered at the origin of the complex plane. In fact, we have
\begin{align*}
| \lambda |^2 & = \lambda \lambda^* =  \dfrac{1}{(1+\tau \hat{\lambda})^2}\left[ (a+\tau \hat{\lambda})^2 + b^2 \right] =  \dfrac{1}{(1+\tau \hat{\lambda})^2}\left[ a^2 + b^2 +(\tau \hat{\lambda})^2 + 2 a \tau \hat{\lambda} \right]\\
 & \leq \dfrac{1}{(1+\tau \hat{\lambda})^2}\left[ 1 +(\tau \hat{\lambda})^2 + 2 a \tau \hat{\lambda} \right] \leq \dfrac{1}{(1+\tau \hat{\lambda})^2}\left[ 1 +(\tau \hat{\lambda})^2 + 2 \tau \hat{\lambda} \right] = 1.
\end{align*}

Since $\lambda$ is an eigenvalue of the matrix $\mathcal{P}_{\mathrm{RK}}^{-1}(I_s \otimes M + \tau A_{\mathrm{RK}}\otimes K)$, the above gives the desired result.
\end{proof}

The next lemma gives a further characterization of the eigenvalues of the preconditioned matrix $\mathcal{P}_{\mathrm{RK}}^{-1}(I_s \otimes M + \tau A_{\mathrm{RK}}\otimes K)$.
	\begin{lemma}
		Let the hypotheses of Theorem
		\ref{optimality_preconditioner} hold. If, in addition,
 $1 \in \sigma(U^\top V)$ with multiplicity $k \ge 1$, then 1 is also an eigenvalue of $\mathcal{P}_{\mathrm{RK}}^{-1}(I_s \otimes M + \tau A_{\mathrm{RK}}\otimes K)$ with (geometric) multiplicity at least $k\cdot n_x$.
	\end{lemma}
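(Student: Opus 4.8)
The plan is to reuse the change of variables established in the proof of Theorem~\ref{optimality_preconditioner}. Writing $N := M^{-\frac{1}{2}} K M^{-\frac{1}{2}}$, that argument shows that $(\lambda,\mathbf{x})$ is an eigenpair of $\mathcal{P}_{\mathrm{RK}}^{-1}(I_s \otimes M + \tau A_{\mathrm{RK}}\otimes K)$ if and only if the vector $\mathbf{z} := (I_s \otimes M^{\frac{1}{2}})(V^\top \otimes I_{n_x})\mathbf{x}$ solves the generalized eigenproblem \eqref{GEP}, namely
\begin{displaymath}
((U^\top V) \otimes I_{n_x} + \tau \Sigma \otimes N)\,\mathbf{z} = \lambda\,(I_s \otimes I_{n_x} + \tau \Sigma \otimes N)\,\mathbf{z}.
\end{displaymath}
Since both $V^\top$ and $M^{\frac{1}{2}}$ are invertible, the linear map $\mathbf{x}\mapsto\mathbf{z}$ is a bijection, so for each fixed $\lambda$ it carries the $\lambda$-eigenspace of $\mathcal{P}_{\mathrm{RK}}^{-1}(I_s \otimes M + \tau A_{\mathrm{RK}}\otimes K)$ onto the solution space of the identity above, and these two spaces have the same dimension. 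It therefore suffices to show that, for $\lambda = 1$, the latter solution space has dimension at least $k\cdot n_x$.

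Setting $\lambda = 1$ in the displayed identity and cancelling the common summand $\tau(\Sigma \otimes N)\mathbf{z}$ from both sides, the equation collapses to $((U^\top V)\otimes I_{n_x})\mathbf{z} = \mathbf{z}$, i.e.\ $\big((U^\top V - I_s)\otimes I_{n_x}\big)\mathbf{z} = \mathbf{0}$; conversely, every $\mathbf{z}$ in this kernel is a solution for $\lambda = 1$. Hence the solution space for $\lambda = 1$ is exactly $\ker\!\big((U^\top V - I_s)\otimes I_{n_x}\big)$, which by the standard Kronecker identity equals $\ker(U^\top V - I_s)\otimes\mathbb{C}^{n_x}$ and thus has dimension $n_x\cdot\dim\ker(U^\top V - I_s)$. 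Because $U^\top V$ is orthogonal, hence normal, the eigenvalue $1$ is semisimple, so $\dim\ker(U^\top V - I_s) = k$. Therefore this space has dimension $k\cdot n_x$, and transporting it back through the bijection $\mathbf{z}\mapsto\mathbf{x}$ yields $k\cdot n_x$ linearly independent eigenvectors of $\mathcal{P}_{\mathrm{RK}}^{-1}(I_s \otimes M + \tau A_{\mathrm{RK}}\otimes K)$ associated with the eigenvalue $1$, which gives the asserted geometric multiplicity.

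I do not expect a genuine obstacle; the one point that deserves care is the bookkeeping of multiplicities. If the $k$ in the hypothesis denotes the algebraic multiplicity of $1$ as an eigenvalue of $U^\top V$, then one must invoke normality to ensure the geometric multiplicity is also $k$ (and not merely at most $k$), which is what keeps $\dim\ker(U^\top V - I_s)$ from being smaller; if it is read geometrically, the equality is immediate. One should also note that the assumption on the field of values of $U^\top V$ from Theorem~\ref{optimality_preconditioner} plays no role here, since the reduction to \eqref{GEP} is purely algebraic. Incidentally, the same argument shows that the geometric multiplicity of $1$ is \emph{exactly} $k\cdot n_x$, even though only the lower bound is claimed.
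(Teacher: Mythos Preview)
Your proof is correct and arguably more direct than the paper's. Both arguments start from the generalized eigenproblem \eqref{GEP} obtained in Theorem~\ref{optimality_preconditioner}, but from there the paper introduces a further spectral decomposition $U^\top V = Q\Lambda Q^*$, changes variables once more to $\mathbf{w}=(Q^*\otimes I_{n_x})\mathbf{z}$, and then reads off the eigenspace from the explicit block structure of $(I_{s\cdot n_x}+T)^{-1}(\Lambda\otimes I_{n_x}+T)$. You bypass this extra diagonalization entirely: setting $\lambda=1$ in \eqref{GEP} and cancelling the common term $\tau(\Sigma\otimes N)\mathbf{z}$ reduces the problem immediately to $\ker\big((U^\top V - I_s)\otimes I_{n_x}\big)$, whose dimension is $k\cdot n_x$ by the Kronecker rank formula and normality of $U^\top V$. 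Your route is shorter and, as you observe, yields the sharper statement that the geometric multiplicity is \emph{exactly} $k\cdot n_x$; the paper's block-structure argument, on the other hand, makes the eigenvectors themselves more visible and would generalize more readily if one wanted to describe the remaining eigenspaces. Your remark that the field-of-values hypothesis from Theorem~\ref{optimality_preconditioner} is not actually used here is also correct.
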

	\begin{proof}
	The orthogonal matrix $U^\top V$ can be spectrally decomposed as $U^\top V = Q \Lambda Q^*$, with $\Lambda = \text{diag}
	\left(\mu_1, \ldots, \mu_s\right)$. 
	We can then rewrite \eqref{GEP} as
\begin{displaymath}
(Q \Lambda Q^* \otimes I_{n_x} + \tau \Sigma \otimes (M^{-\frac{1}{2}}KM^{-\frac{1}{2}})) \mathbf{z} = \lambda (I_s \otimes I_{n_x} + \tau \Sigma \otimes (M^{-\frac{1}{2}}KM^{-\frac{1}{2}})) \mathbf{z}.
\end{displaymath}
	Multiplying both sides of the previous equality on the left by $Q^* \otimes I_{n_x}$ and setting $\mathbf{w} = (Q^* \otimes I_{n_x}) \mathbf{z}$  yields
\begin{displaymath}
	(\Lambda \otimes I_{n_x} + T ) \mathbf{w} = \lambda (I_s \otimes I_{n_x} + T ) \mathbf{w},
\end{displaymath}
where $T = \tau (Q^* \Sigma Q)\otimes (M^{-\frac{1}{2}}KM^{-\frac{1}{2}})$.
	Then, the previous expression gives
	\[ (I_{s \cdot n_x} + T)^{-1}(\Lambda \otimes I_{n_x} + T) \mathbf{w} = \lambda \mathbf{w}.\]
	Assuming without loss of generality that $\mu_1 = \mu_2 = \cdots = \mu_k = 1$, we can write
	\begin{eqnarray*}
		(I_{s \cdot n_x} + T)^{-1}(\Lambda \otimes I_{n_x} + T)   &= &
		(I_{s \cdot n_x} + T)^{-1}(I_{s \cdot n_x} + T + (\Lambda-I_s) \otimes I_{n_x})  \nonumber \\
		&=& I_{s \cdot n_x} +	(I_{s \cdot n_x} + T)^{-1}((\Lambda-I_s) \otimes I_{n_x}) \nonumber \\
		& =&  \begin{bmatrix} 
			\text{\large{$	I_{k\cdot n_x}$}} && \text{\large $\ast$} \\[.4em]
			\text{\large{$  0$}}   && \text{\large $\ast$}
		\end{bmatrix},
	\end{eqnarray*}
		since $\Lambda-I_s=\text{diag}(0, \ldots, 0, \mu_{k+1}-1, \ldots, \mu_s-1)$.
		 Denoting with $\mathbf e_j$ the $j$-th vector of the canonical basis, the previous equality shows that for every nonzero vector $\mathbf{t} \in \mathbb{R}^{n_x}$, $\mathbf{w} = \mathbf{e}_j \otimes \mathbf{t}, \ j = 1, \ldots, k$
		is an eigenvector of $(I_{s \cdot n_x} + T)^{-1}(\Lambda \otimes I_{n_x} + T)$ corresponding to the eigenvalue $\lambda =1$.
\end{proof}

In Figure \ref{eig_distr_heat_equation}, we report the eigenvalue distributions of the matrices $\mathcal{P}_{\mathrm{RK}}^{-1}(I_s \otimes M + \tau A_{\mathrm{RK}} \otimes K)$ and $U^\top V$, employing $\mathbf{Q}_2$ elements, for 3-stage Gauss, 4-stage Lobatto IIIC, 5-stage Radau IIA, and 9-stage Radau IIA methods, with $\tau=0.2$ and level of refinement $l=4$. Here, $l$ represents a spatial uniform grid of mesh size $h=2^{-l}$, in each dimension. Further, in green we plot the unit circle centered at the origin of the complex plane.

\begin{figure}[!htb]
\centering
\caption{Eigenvalue distributions of $\mathcal{P}_{\mathrm{RK}}^{-1}(I_s \otimes M + \tau A_{\mathrm{RK}} \otimes K)$ and of $U^\top V$, for 3-stage Gauss, 4-stage Lobatto IIIC, 5-stage Radau IIA, and 9-stage Radau IIA methods, with $\tau = 0.2$, and $l=4$. In green, we plot the unit circle centered at the origin of the complex plane.}
\label{eig_distr_heat_equation}
\begin{subfigure}[b]{0.4\textwidth}
\centering
\includegraphics[width=1.1\linewidth]{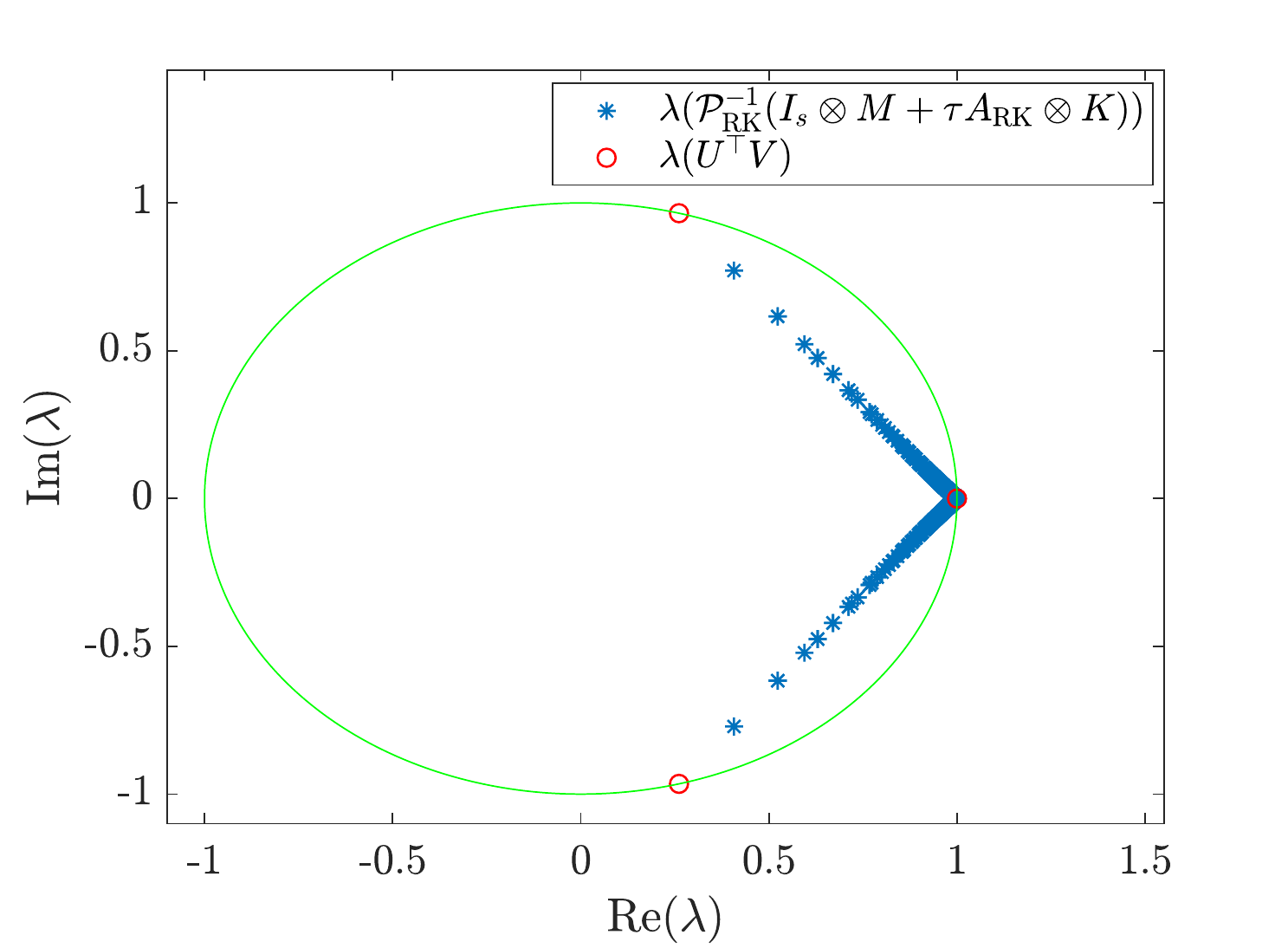}
\caption{3-stage Gauss.}
\end{subfigure}\hspace{2.5em}
\begin{subfigure}[b]{0.4\textwidth}
\centering
\includegraphics[width=1.1\linewidth]{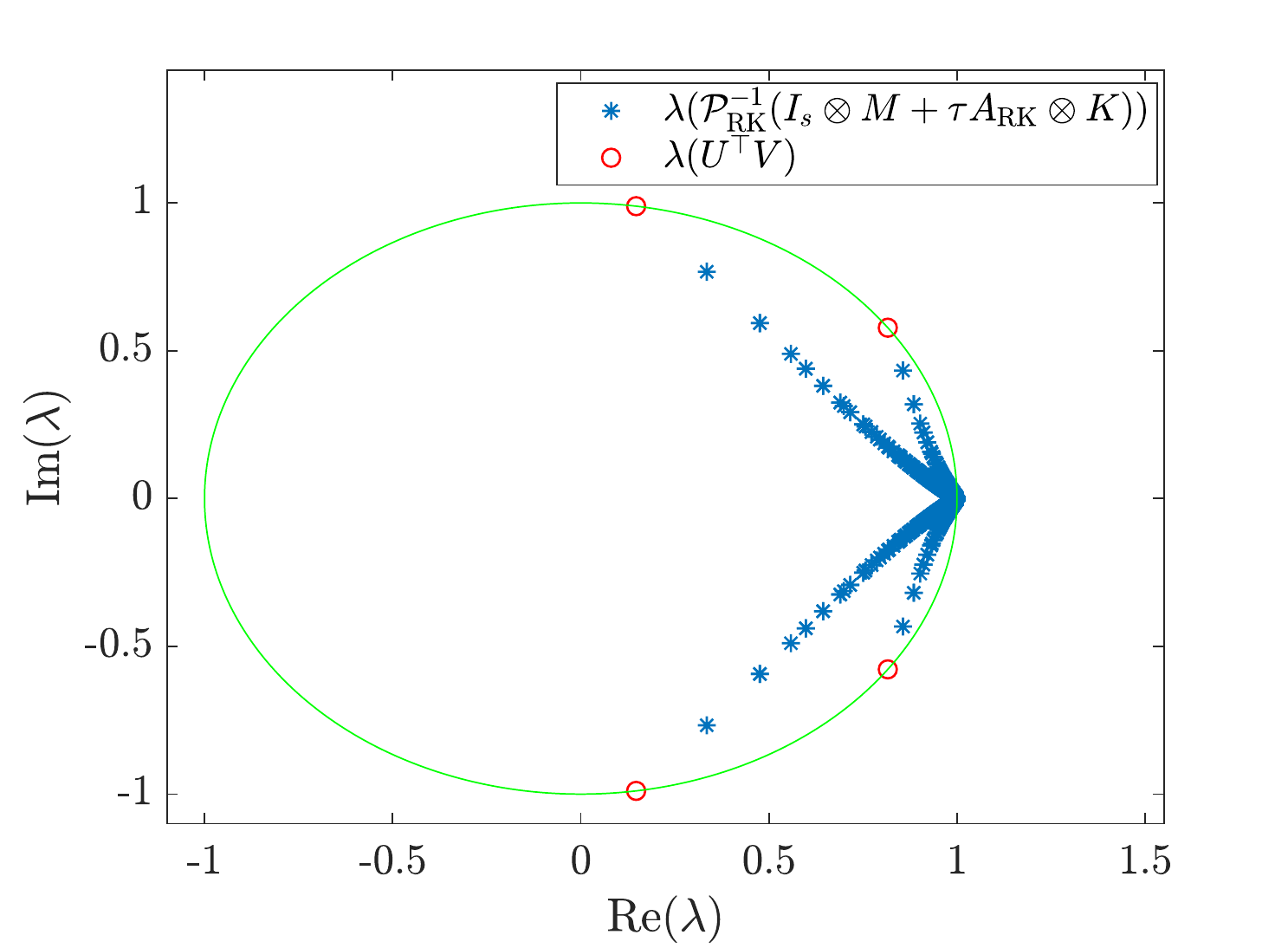}
\caption{4-stage Lobatto IIIC.}
\end{subfigure}
\begin{subfigure}[b]{0.4\textwidth}
\centering
\includegraphics[width=1.1\linewidth]{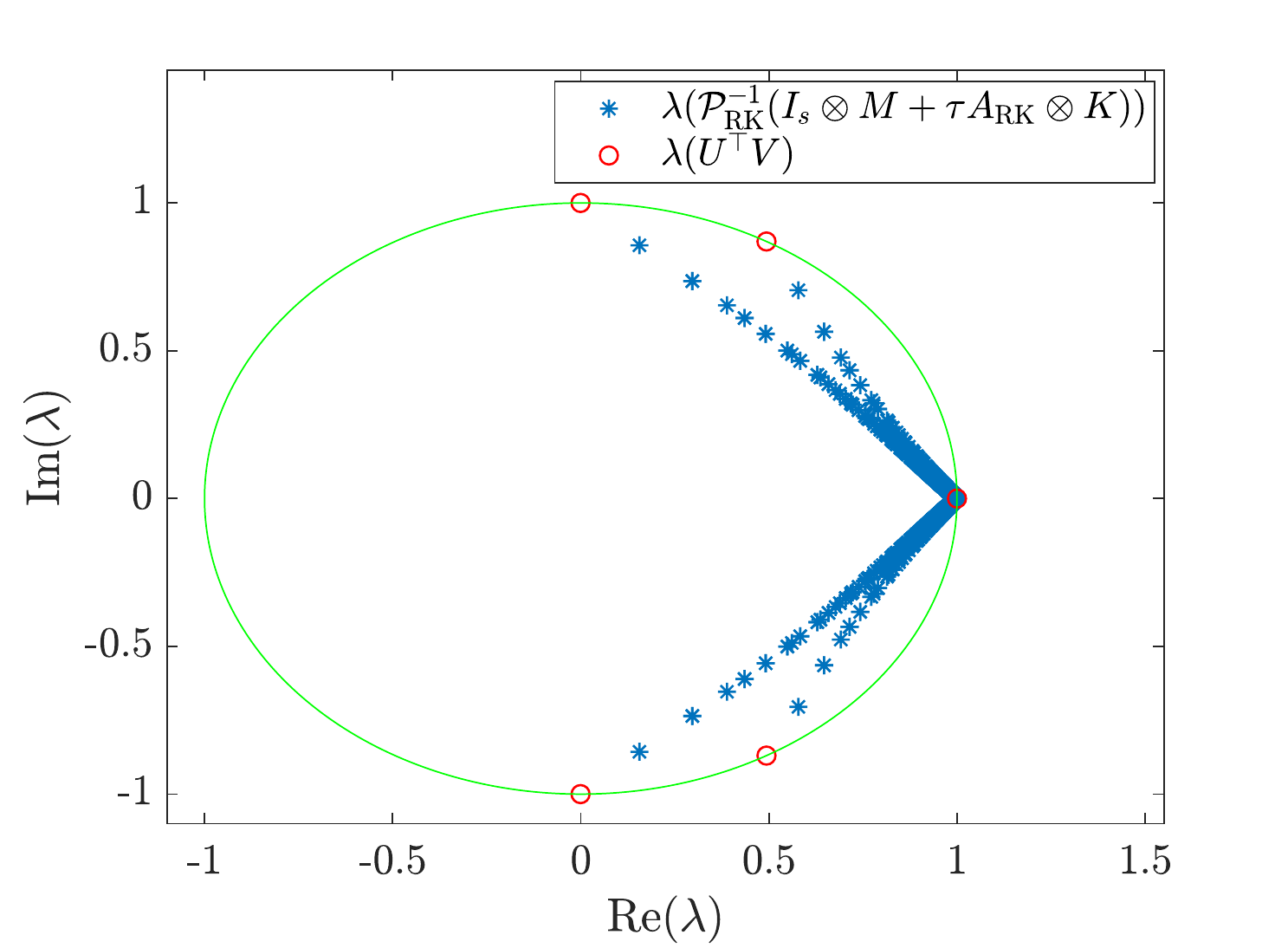}
\caption{5-stage Radau IIA.}
\end{subfigure}\hspace{2.5em}
\begin{subfigure}[b]{0.4\textwidth}
\centering
\includegraphics[width=1.1\linewidth]{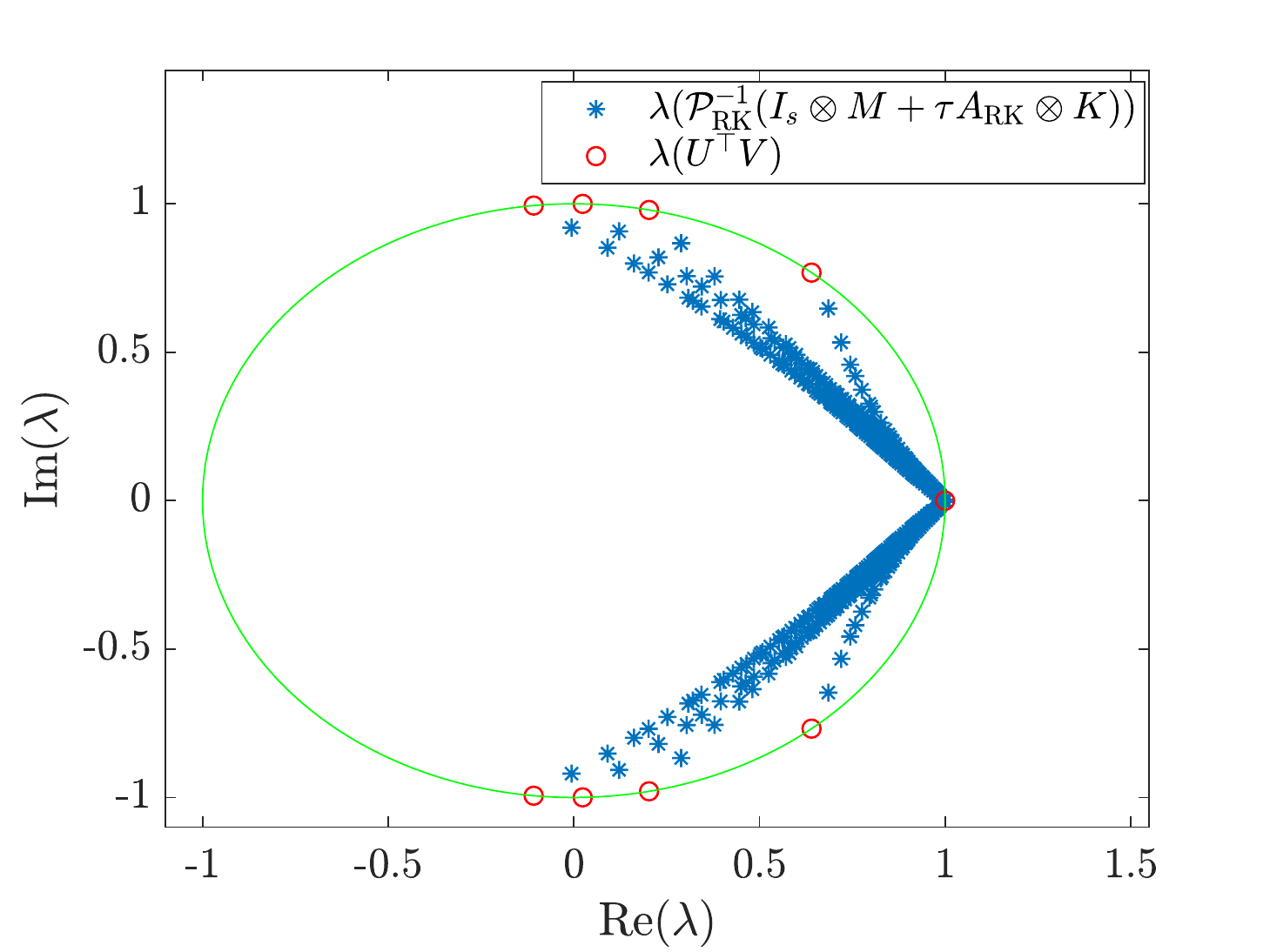}
\caption{9-stage Radau IIA.}
\end{subfigure}
\end{figure}

Interestingly, as observed in Figure \ref{eig_distr_heat_equation}, the eigenvalues of $\mathcal{P}_{\mathrm{RK}}^{-1}(I_s \otimes M + \tau A_{\mathrm{RK}} \otimes K)$ all lie in, or near, segments that join $1$ to an eigenvalue of $U^\top V$. We are not able to explain this behavior using the analysis of Theorem \ref{optimality_preconditioner}. However,  we would like to note that, in practice, the eigenvalues seem to locate away from $0$. In particular, we cannot expect $0$ to be an eigenvalue of $\mathcal{P}_{\mathrm{RK}}^{-1}(I_s \otimes M + \tau A_{\mathrm{RK}} \otimes K)$, since both $\mathcal{P}_{\mathrm{RK}}$ and $I_s \otimes M + \tau A_{\mathrm{RK}} \otimes K$ are invertible.

We would like to mention that, for all the Runge--Kutta methods we employ (excluding the 5-stage Radau IIA), the real part of the eigenvalues of the matrix $U^\top V$ is positive\footnote{For the 5-stage Radau IIA method, some of the eigenvalues of the matrix $U^\top V$ have negative, but very close to $0$, real part, as seen from Figure \ref{eig_distr_heat_equation}.}, thus the assumption of Theorem \ref{optimality_preconditioner} is not excessively restrictive. Further, following the sketch of the proof above, one can understand that we are able to derive this result about the preconditioner only because the matrix $K$ is symmetric positive semi-definite. For this reason, we cannot expect this property of the preconditioner to hold for more general problems.

\subsubsection{Stokes equations}\label{sec_3_1_2}
In this section we derive a preconditioner for the block $\widehat{\Theta}$ defined in \eqref{system_for_stages_Stokes}.

We recall that the matrix $\widehat{\Theta}$ is given by
\begin{displaymath}
\widehat{\Theta}=\left[
\begin{array}{cc}
I_s \otimes M_v + \tau A_{\mathrm{RK}}\otimes K_v & \tau A_{\mathrm{RK}}\otimes B^\top\\
\tau A_{\mathrm{RK}}\otimes B & 0
\end{array}
\right].
\end{displaymath}
In order to solve for this system, we employ as a preconditioner
\begin{displaymath}
\mathcal{P}_{\mathrm{RK}}=
\left[
\begin{array}{cc}
I_s \otimes M_v + \tau A_{\mathrm{RK}}\otimes K_v & 0\\
\tau A_{\mathrm{RK}}\otimes B & S_{\mathrm{RK}}
\end{array}
\right],
\end{displaymath}
where
\begin{equation}\label{S_RK_Stokes}
S_{\mathrm{RK}}= - \tau^2 (A_{\mathrm{RK}}\otimes B) (I_s \otimes M_v + \tau A_{\mathrm{RK}}\otimes K_v)^{-1} (A_{\mathrm{RK}}\otimes B^\top).
\end{equation}

The $(1,1)$-block $I_s \otimes M_v + \tau A_{\mathrm{RK}}\otimes K_v$ can be dealt with as for the heat equation. More specifically, we approximate it with $( U \otimes I_{n_v} ) (I_s \otimes M_v + \tau \Sigma \otimes K_v) ( V^\top \otimes I_{n_v} )$.

In order to find a suitable approximation of the Schur complement $S_\mathrm{RK}$, we observe that
\begin{align*}
\vspace{0.5ex}
A_{\mathrm{RK}}\otimes B & = (A_{\mathrm{RK}}\otimes I_{n_p}) 
(I_s \otimes B),\\
A_{\mathrm{RK}}\otimes B^\top & =
(I_s \otimes B^\top)
(A_{\mathrm{RK}}\otimes I_{n_p}) .
\end{align*}
It is clear that if we find a suitable approximation of the following matrix:
\begin{displaymath}
\widetilde{S}_{\mathrm{int}} \approx S_{\mathrm{int}}=
(I_s \otimes B)
(I_s \otimes M_v + \tau A_{\mathrm{RK}}\otimes K_v)^{-1}
(I_s \otimes B^\top),
\end{displaymath}
then a suitable approximation of the Schur complement $S_{\mathrm{RK}}$ is given by
\begin{equation}\label{widetilde_S_RK}
\widetilde{S}_{\mathrm{RK}}:= - \tau^2 (A_{\mathrm{RK}}\otimes I_{n_p})\widetilde{S}_{\mathrm{int}} (A_{\mathrm{RK}}\otimes I_{n_p}).
\end{equation}
Note that the matrix $(A_{\mathrm{RK}}\otimes I_{n_p})$ can be easily inverted in parallel by making use of the SVD of the matrix $A_{\mathrm{RK}}=U \Sigma V^\top$.

In order to derive a suitable approximation of the matrix $S_{\mathrm{int}}$, we employ the block-commutator argument derived in \cite{Leveque_Pearson}. We would like to mention that a similar approach has been derived independently and employed for another parallel-in-time solver for the incompressible Navier--Stokes equations by the authors in \cite{Danieli_Southworth_Wathen}. The approximation we employ is given by
\begin{displaymath}
\widetilde{S}_{\mathrm{int}} := 
(I_s \otimes K_p)
(I_s \otimes M_p + \tau A_{\mathrm{RK}}\otimes K_p)^{-1}
(I_s \otimes M_p).
\end{displaymath}
Then, our approximation of the Schur complement $S_{\mathrm{RK}}$ is given by \eqref{widetilde_S_RK} with this choice of $\widetilde{S}_{\mathrm{int}}$. For details on the derivation of the approximation, we refer to \cite{Leveque_Pearson}.

In Figure \ref{eig_distr_Schur_compl_Stokes}, we report the eigenvalues of the matrix $\widetilde{S}_{\mathrm{RK}}^{-1}S_{\mathrm{RK}}$, for 3-stage Gauss, 3-stage Lobatto IIIC, and 3-stage Radau IIA methods, with $\tau=0.2$, and level of refinement $l=5$. Here, $l$ represents a (spatial) uniform grid of mesh size $h=2^{1-l}$ for $\mathbf{Q}_1$ basis functions, and $h=2^{-l}$ for $\mathbf{Q}_2$ elements, in each dimension. Since for the problem we are considering the matrix $K_p$ is not invertible, we derive an invertible approximation $\widetilde{S}_{\mathrm{RK}}$ by ``pinning'' the value of one of the nodes of the matrix $K_p$, for each $K_p$ within the definition of $\widetilde{S}_{\mathrm{int}}$.

\begin{figure}[!htb]
\centering
\caption{Block-commutator approximation for the Stokes equations. Eigenvalues of $\widetilde{S}_{\mathrm{RK}}^{-1}S_{\mathrm{RK}}$, for 3-stage Gauss, 3-stage Lobatto IIIC, and 3-stage Radau IIA methods, with $\tau = 0.2$, and $l=5$.}
\label{eig_distr_Schur_compl_Stokes}
\begin{subfigure}[b]{0.4\textwidth}
\centering
\includegraphics[width=1.1\linewidth]{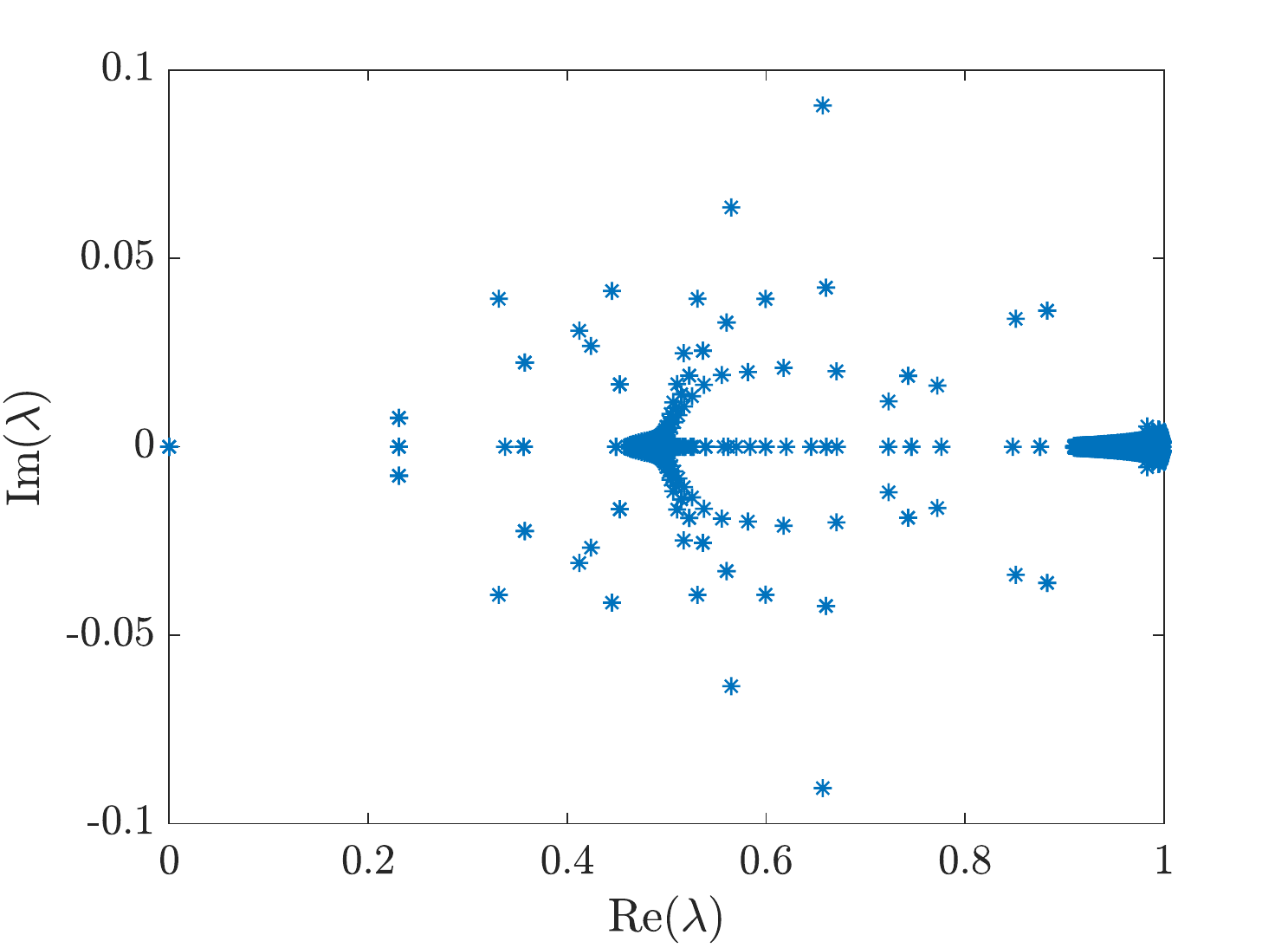}
\caption{3-stage Gauss.}
\end{subfigure}\hspace{2.5em}
\begin{subfigure}[b]{0.4\textwidth}
\centering
\includegraphics[width=1.1\linewidth]{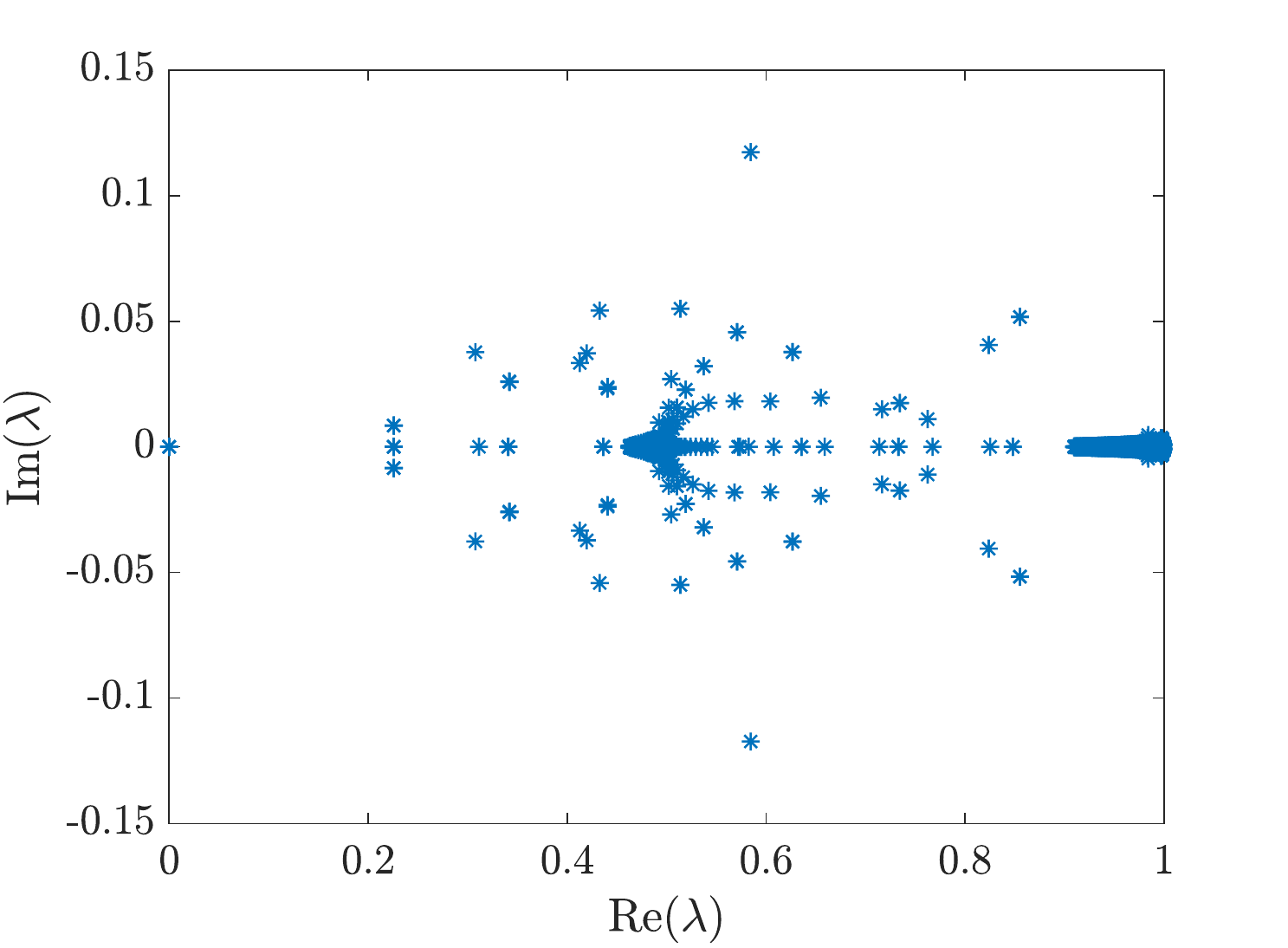}
\caption{3-stage Lobatto IIIC.}
\end{subfigure}
\begin{subfigure}[b]{0.4\textwidth}
\centering
\includegraphics[width=1.1\linewidth]{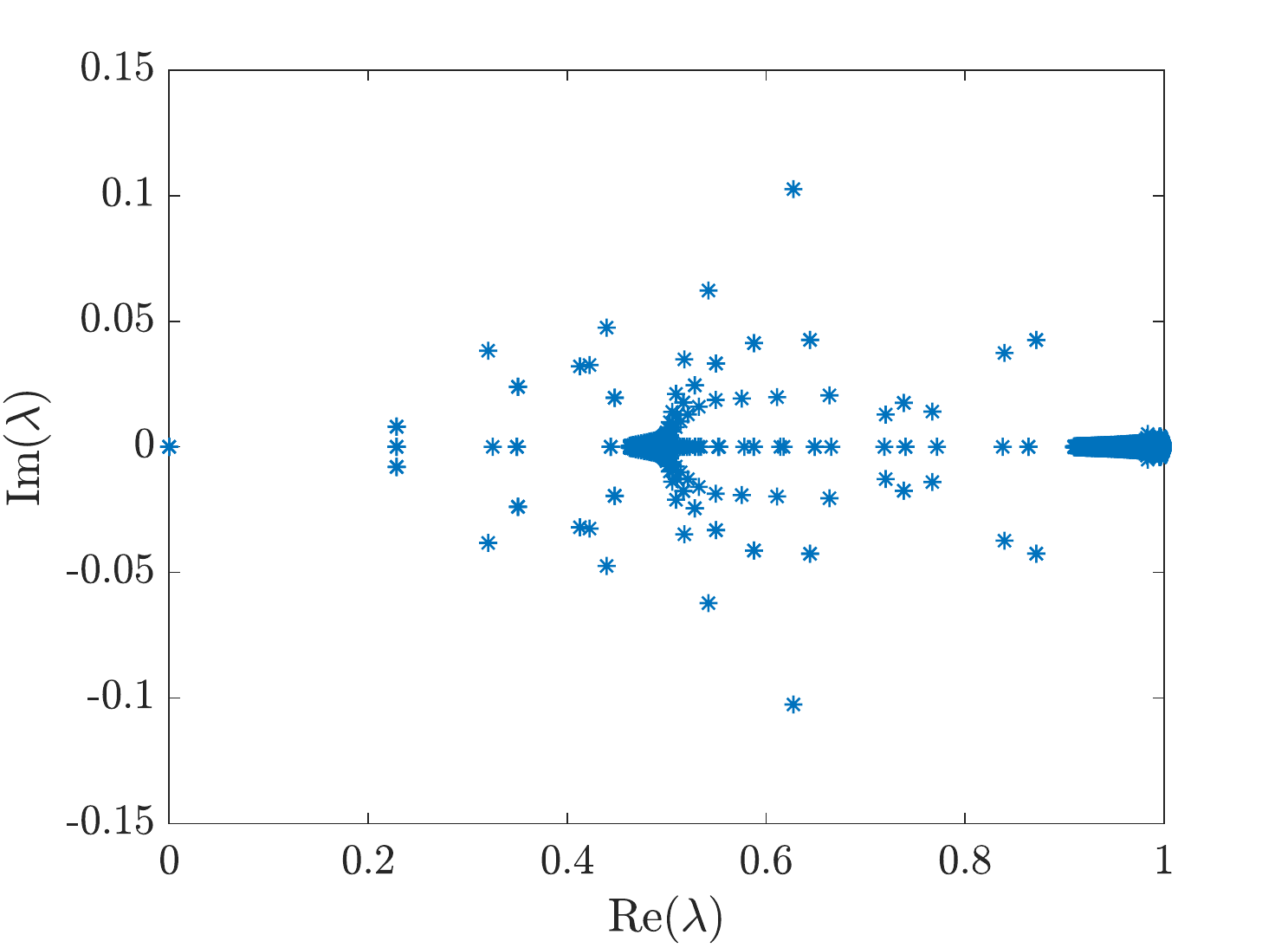}
\caption{3-stage Radau IIA.}
\end{subfigure}
\end{figure}

Finally, we can present our approximation $\widetilde{\mathcal{P}}_{\mathrm{RK}}$ of the preconditioner $\mathcal{P}_{\mathrm{RK}}$ for the matrix $\widehat{\Theta}$. By employing a well known property of the Kronecker product, the approximation of the preconditioner $\mathcal{P}_{\mathrm{RK}}$ is given by the following:
\begin{equation}\label{prec_Stokes_equation}
\widetilde{\mathcal{P}}_{\mathrm{RK}} = 
\left[
\begin{array}{cc}
U \otimes I_{n_v} & 0\\
0 & U \otimes I_{n_p}
\end{array}
\right]
\widetilde{\mathcal{P}}_{\mathrm{int}}
\left[
\begin{array}{cc}
V^\top \otimes I_{n_v} & 0\\
0 & V^\top \otimes I_{n_p}
\end{array}
\right],
\end{equation}
with
\begin{displaymath}
\widetilde{\mathcal{P}}_{\mathrm{int}}=
\left[
\begin{array}{cc}
I_s \otimes M + \tau \Sigma \otimes K & 0\\
\tau \Sigma \otimes B & -\tau^2 \left( (\Sigma V^\top) \otimes I_{n_p}\right) \widetilde{S}_\mathrm{int} \left( (U\Sigma) \otimes I_{n_p}\right)
\end{array}
\right].
\end{displaymath}

Before completing this section, we would like to mention that the matrix $\widehat{\Theta}$ is not invertible. For this reason, the block $\Theta = I_{n_t} \otimes \widehat{\Theta}$ is not invertible, and one cannot employ the preconditioner defined in \eqref{preconditioner_all_at_once} as it is. We thus rather employ as $(2,2)$-block the matrix $\widetilde{\Theta} = I_{n_t} \otimes \widetilde{\Theta}$, with $\widetilde{\Theta}$ given by the following invertible perturbation of $\widehat{\Theta}$:
\begin{displaymath}
\widetilde{\Theta}=\left[
\begin{array}{cc}
I_s \otimes M_v + \tau A_{\mathrm{RK}}\otimes K_v & \tau A_{\mathrm{RK}}\otimes B^\top\\
\tau A_{\mathrm{RK}}\otimes B & - \tau^2 \gamma (A_{\mathrm{RK}}^2 \otimes M_p)
\end{array}
\right],
\end{displaymath}
where $0<\gamma\ll 1$. The parameter $\gamma$ has to be chosen such that the perturbed matrix $\widetilde{\Theta}$ is invertible, but ``close'' enough to the original $\widehat{\Theta}$. In our tests, we chose $\gamma=10^{-4}$. Note that we chose this perturbation of the $(2,2)$-block for the expression of the Schur complement $S_{\mathrm{RK}}$ given in \eqref{S_RK_Stokes}. In fact, it is easy to see that
\begin{displaymath}
- \tau^2 \gamma (A_{\mathrm{RK}}^2 \otimes M_p) = - \tau^2 (A_{\mathrm{RK}} \otimes I_{n_p}) (\gamma I_s \otimes M_p) (A_{\mathrm{RK}} \otimes I_{n_p}),
\end{displaymath}
implying that we perturb $S_{\mathrm{int}}$ only by the matrix $\gamma I_s \otimes M_p$.

\section{Numerical results}\label{sec_4}
We now provide numerical evidence of the effectiveness of our preconditioning strategies. In all our tests, $d=2$ (that is, $\mathbf{x}=(x_1,x_2)$), and $\Omega=(-1,1)^2$. We employ $\mathbf{Q}_1$ and $\mathbf{Q}_2$ finite element basis functions in the spatial dimensions for the heat equation, while employing inf--sup stable Taylor--Hood $\mathbf{Q}_2$--$\mathbf{Q}_1$ elements for the Stokes equations. In all our tests, the level of refinement $l$ represents a (spatial) uniform grid of mesh size $h=2^{1-l}$ for $\mathbf{Q}_1$ basis functions, and $h=2^{-l}$ for $\mathbf{Q}_2$ elements, in each dimension. Regarding the time grid, denoting with $q_{\mathrm{FE}}$ the order of the finite element approximations and with $q_{\mathrm{RK}}$ the order of the Runge--Kutta method we employ, we choose as the number of intervals in time $n_t$ the closest integer such that $\tau \leq h^{q_{\mathrm{FE}}/q_{\mathrm{RK}}}$, where $\tau=t_f/n_t$ is the time-step.

In all our tests, we apply 20 steps of Chebyshev semi-iteration \cite{GolubVargaI,GolubVargaII,Wathen_Rees} with Jacobi splitting to approximately solve systems involving mass matrices, while employing 2 V-cycles (with 2 symmetric Gauss--Seidel iterations for pre-/post-smoothing) of the \texttt{HSL\_MI20} solver \cite{HSL_MI20} for other matrices.

\subsection{Sequential results}
Before showing the parallel efficiency of our solver, we present numerical results in a sequential framework, showing the robustness of our approximations of the main blocks of the system considered. All sequential tests are run on MATLAB R2018b, using a 1.70GHz Intel quad-core i5 processor and 8 GB RAM on an Ubuntu 18.04.1 LTS operating system. All CPU times below are reported in seconds.

\subsubsection{Heat equation: sequential time-stepping}\label{heat_equation:sequential_time_stepping}
In this section, we compare our SVD-based preconditioner with the block-diagonal preconditioner $\mathcal{P}_{\mathrm{MNS}}$ derived in \cite{Mardal_Nilssen_Staff}. We refer the reader to Section \ref{sec_3_1_1} for this choice of preconditioner. We evaluate the numerical solution in a classical way, solving sequentially for the time step $n$, that is, we solve for the system of the stages given in \eqref{RK_stages_heat_equation} and then update the solution at time $t_{n+1}$ with \eqref{RK_discretization_for_v_heat_equation}. We compare the number of iterations required for convergence up to a tolerance $10^{-8}$ on the relative residual for both methods. We employ GMRES as the Krylov solver, with a restart every 10 iterations.

We take $t_f=2$, and solve the problem \eqref{heat_equation} with solution given by
\begin{displaymath}
v (\mathbf{x},t) = e^{t_f-t} \cos\left(\frac{\pi x_1}{2}\right) \cos\left(\frac{\pi x_2}{2}\right)+1,
\end{displaymath}
with initial and boundary conditions obtained from this $v$. In Tables \ref{sequential_heat_equation_table_Gauss}--\ref{sequential_heat_equation_table_Radau}, we report the average number of GMRES iterations and the total CPU times for solving all the linear systems together with the numerical errors $v_{\text{error}}$ of the solutions\footnote{We report only one value of $v_{\text{error}}$ per finite element discretization, as the numerical errors obtained when using both preconditioners coincide at least up to the second significant digit, excluding the pollution of the linear residual on the solution. The value of $v_{\text{error}}$ that we report is the one obtained with our preconditioner $\mathcal{P}_{\mathrm{RK}}$.}, for different Runge--Kutta methods. We evaluate the numerical error $v_{\text{error}}$ in the scaled vector $\ell^\infty$-norm, defined as
\begin{displaymath}
v_{\text{error}} = \max_{n} \left\{ \dfrac{| v_{\mathtt{j},n} - v_{\mathtt{j},n}^{\mathrm{\: sol}} |}{|v_{\mathtt{j},n}^{\mathrm{\: sol}}|}, \; \mathrm{with} \; \mathtt{j} = \argmax_j | v_{j,n} - v_{j,n}^{\mathrm{\: sol}} | \right\},
\end{displaymath}
where $v_{j,n}$ and $v_{j,n}^{\mathrm{\: sol}}$ are the entries of the computed solution $\mathbf{v}$ and the (discretized) exact solution for $v$ at time $t=t_n$. Further, in Table \ref{sequential_heat_equation_table_Radau} we report the dimension of the system solved for Radau IIA methods when employing $\mathbf{Q}_1$ and $\mathbf{Q}_2$ finite elements, respectively. We would like to note that those values coincide with the dimension of the system arising when employing the corresponding level of refinement of an $s$-stage Gauss or Lobatto IIIC method.

\begin{table}[!ht]
\caption{Sequential solve of the heat equation: average GMRES iterations, total CPU times, and resulting relative errors in $v$ for Gauss methods, employing $\mathbf{Q}_1$ and $\mathbf{Q}_2$ finite elements.}\label{sequential_heat_equation_table_Gauss}
\begin{center}
\begin{footnotesize}
{\begin{tabular}{|c|c||c|c|c|c|c||c|c|c|c|c|}
\hline
\multicolumn{1}{|c}{} & \multicolumn{1}{|c||}{} & \multicolumn{5}{c||}{$\mathbf{Q}_1$} & \multicolumn{5}{c|}{$\mathbf{Q}_2$} \\
\cline{3-12}
\multicolumn{1}{|c}{} & \multicolumn{1}{|c||}{} & \multicolumn{2}{c|}{$\mathcal{P}_{\mathrm{MNS}}$} & \multicolumn{2}{c|}{$\mathcal{P}_{\mathrm{RK}}$} & \multicolumn{1}{c||}{} & \multicolumn{2}{c|}{$\mathcal{P}_{\mathrm{MNS}}$} & \multicolumn{2}{c|}{$\mathcal{P}_{\mathrm{RK}}$} & \multicolumn{1}{c|}{}\\
\cline{3-6} \cline{8-11}
$s$ & $l$ & $\texttt{it}$ & CPU & $\texttt{it}$ & CPU & $v_{\text{error}}$ & $\texttt{it}$ & CPU & $\texttt{it}$ & CPU & $v_{\text{error}}$\\
\hline
\hline
\multirow{5}{*}{2} & $3$ & 10 & 0.05 & 8 & 0.05 & 6.35e-03  & 10 & 0.1 & 8 & 0.1 & 1.33e-04 \\
 & $4$ & 10 & 0.1 & 8 & 0.09 & 1.69e-03 & 10 & 0.4 & 9 & 0.4 & 1.62e-05 \\
 & $5$ & 10 & 0.3 & 8 & 0.2 & 4.55e-04 & 10 & 2.2 & 10 & 2.2 & 2.39e-06 \\
 & $6$ & 10 & 1.3 & 9 & 1.2 & 1.14e-04 & 10 & 16 & 12 & 19 & 2.91e-07 \\
 & $7$ & 10 & 6.7 & 10 & 6.8 & 2.95e-05 & 10 & 116 & 13 & 167 & 3.45e-08 \\
\hline
\multirow{5}{*}{3} & $3$ & 18 & 0.2 & 12 & 0.1 & 5.45e-03 & 18 & 0.2 & 11 & 0.2 & 5.10e-05 \\
 & $4$ & 17 & 0.2 & 11 & 0.1 & 1.53e-03 & 18 & 0.6 & 13 & 0.5 & 3.43e-06 \\
 & $5$ & 16 & 0.5 & 12 & 0.4 & 4.17e-04 & 18 & 3.1 & 15 & 2.5 & 2.36e-07 \\
 & $6$ & 16 & 1.8 & 12 & 1.6 & 1.07e-04 & 17 & 20 & 17 & 18 & 1.66e-08 \\
 & $7$ & 16 & 8.0 & 13 & 7.0 & 2.71e-05 & 17 & 107 & 18 & 122 & 1.78e-09 \\
\hline
\end{tabular}}
\end{footnotesize}
\end{center}
\end{table}

\begin{table}[!ht]
\caption{Sequential solve of the heat equation: average GMRES iterations, total CPU times, and resulting relative errors in $v$ for Lobatto IIIC methods, employing $\mathbf{Q}_1$ and $\mathbf{Q}_2$ finite elements.}\label{sequential_heat_equation_table_Lobatto}
\begin{center}
\begin{footnotesize}
{\begin{tabular}{|c|c||c|c|c|c|c||c|c|c|c|c|}
\hline
\multicolumn{1}{|c}{} & \multicolumn{1}{|c||}{} & \multicolumn{5}{c||}{$\mathbf{Q}_1$} & \multicolumn{5}{c|}{$\mathbf{Q}_2$} \\
\cline{3-12}
\multicolumn{1}{|c}{} & \multicolumn{1}{|c||}{} & \multicolumn{2}{c|}{$\mathcal{P}_{\mathrm{MNS}}$} & \multicolumn{2}{c|}{$\mathcal{P}_{\mathrm{RK}}$} & \multicolumn{1}{c||}{} & \multicolumn{2}{c|}{$\mathcal{P}_{\mathrm{MNS}}$} & \multicolumn{2}{c|}{$\mathcal{P}_{\mathrm{RK}}$} & \multicolumn{1}{c|}{}\\
\cline{3-6} \cline{8-11}
$s$ & $l$ & $\texttt{it}$ & CPU & $\texttt{it}$ & CPU & $v_{\text{error}}$ & $\texttt{it}$ & CPU & $\texttt{it}$ & CPU & $v_{\text{error}}$\\
\hline
\hline
\multirow{5}{*}{2} & $3$ & 12 & 0.1 & 7 & 0.09 & 1.36e-02 & 13 & 0.5 & 9 & 0.3 & 2.55e-03 \\
 & $4$ & 12 & 0.3 & 8 & 0.2 & 4.10e-03 & 14 & 2.8 & 10 & 1.9 & 3.66e-04 \\
 & $5$ & 12 & 1.4 & 9 & 0.9 & 1.14e-03 & 13 & 23 & 12 & 22 & 5.06e-05 \\
 & $6$ & 12 & 8.7 & 10 & 7.1 & 3.01e-04 & 13 & 276 & 12 & 263 & 6.45e-06 \\
 & $7$ & 12 & 66 & 12 & 68 & 7.77e-05 & 12 & 3226 & 12 & 3377 & 8.17e-07 \\
\hline
\multirow{5}{*}{3} & $3$ & 26 & 0.2 & 8 & 0.07 & 5.42e-03 & 25 & 0.5 & 10 & 0.2 & 1.15e-04 \\
 & $4$ & 25 & 0.3 & 9 & 0.1 & 1.48e-03 & 26 & 1.6 & 11 & 0.7 & 1.75e-05 \\
 & $5$ & 25 & 0.9 & 10 & 0.4 & 3.79e-04 & 29 & 9.5 & 12 & 4.1 & 2.95e-06 \\
 & $6$ & 25 & 4.7 & 11 & 2.2 & 9.82e-05 & 28 & 64 & 15 & 37 & 3.89e-07 \\
 & $7$ & 26 & 25 & 12 & 13 & 2.43e-05 & 28 & 490 & 17 & 330 & 4.81e-08 \\
\hline
\multirow{5}{*}{4} & $3$ & 40 & 0.4 & 12 & 0.1 & 5.75e-03 & 41 & 0.6 & 13 & 0.2 & 2.37e-05 \\
 & $4$ & 37 & 0.4 & 11 & 0.1 & 1.55e-03 & 42 & 2.0 & 15 & 0.7 & 1.66e-06 \\
 & $5$ & 37 & 1.4 & 13 & 0.5 & 4.18e-04 & 44 & 9.2 & 16 & 3.5 & 1.61e-07 \\
 & $6$ & 38 & 5.4 & 15 & 2.4 & 1.07e-04 & 44 & 60 & 18 & 27 & 1.28e-08 \\
 & $7$ & 39 & 25 & 16 & 11 & 2.73e-05 & 43 & 368 & 18 & 171 & 2.32e-09 \\
\hline
\multirow{5}{*}{5} & $3$ & 56 & 0.5 & 15 & 0.2 & 5.91e-03 & 55 & 1.0 & 15 & 0.3 & 1.94e-05 \\
 & $4$ & 52 & 0.8 & 15 & 0.2 & 1.55e-03 & 56 & 2.8 & 17 & 0.9 & 1.19e-06 \\
 & $5$ & 51 & 1.8 & 14 & 0.5 & 4.07e-04 & 59 & 12 & 18 & 3.7 & 7.39e-08 \\
 & $6$ & 53 & 6.9 & 16 & 2.3 & 1.07e-04 & 57 & 67 & 21 & 27 & 3.95e-09 \\
 & $7$ & 54 & 34 & 17 & 11 & 2.71e-05 & 57 & 389 & 22 & 162 & 7.53e-09 \\
\hline
\end{tabular}}
\end{footnotesize}
\end{center}
\end{table}

\begin{table}[!ht]
\caption{Sequential solve of the heat equation: degrees of freedom (DoF), average GMRES iterations, total CPU times, and resulting relative errors in $v$ for Radau IIA methods, employing $\mathbf{Q}_1$ and $\mathbf{Q}_2$ finite elements.}\label{sequential_heat_equation_table_Radau}
\begin{center}
\begin{footnotesize}
{\begin{tabular}{|c|c||c||c|c|c|c|c||c||c|c|c|c|c|}
\hline
\multicolumn{1}{|c|}{} & \multicolumn{1}{c||}{} & \multicolumn{6}{c||}{$\mathbf{Q}_1$} & \multicolumn{6}{c|}{$\mathbf{Q}_2$} \\
\cline{3-14}
\multicolumn{1}{|c|}{} & \multicolumn{1}{c||}{} & \multicolumn{1}{c||}{} & \multicolumn{2}{c|}{$\mathcal{P}_{\mathrm{MNS}}$} & \multicolumn{2}{c|}{$\mathcal{P}_{\mathrm{RK}}$} & \multicolumn{1}{c||}{} & \multicolumn{1}{c||}{} & \multicolumn{2}{c|}{$\mathcal{P}_{\mathrm{MNS}}$} & \multicolumn{2}{c|}{$\mathcal{P}_{\mathrm{RK}}$} & \multicolumn{1}{c|}{}\\
\cline{4-7} \cline{10-13}
$s$ & $l$ & DoF & $\texttt{it}$ & CPU & $\texttt{it}$ & CPU & $v_{\text{error}}$ & DoF & $\texttt{it}$ & CPU & $\texttt{it}$ & CPU & $v_{\text{error}}$ \\
\hline
\hline
\multirow{5}{*}{2} & $3$ & 98 & 11 & 0.09 & 8 & 0.08 & 5.48e-03 & 450 & 12 & 0.2 & 8 & 0.2 & 2.10e-04 \\
 & $4$ & 450 & 10 & 0.1 & 8 & 0.08 & 1.39e-03 & 1922 & 12 & 0.8 & 10 & 0.6 & 2.86e-05 \\
 & $5$ & 1922 & 12 & 0.5 & 9 & 0.4 & 3.67e-04 & 7938 & 12 & 5.4 & 11 & 5.1 & 3.76e-06 \\
 & $6$ & 7938 & 12 & 2.9 & 10 & 2.3 & 9.44e-05 & 32,258 & 12 & 45 & 14 & 52 & 4.82e-07 \\
 & $7$ & 32,258 & 12 & 17 & 11 & 16 & 2.34e-05 & 130,050 & 12 & 400 & 14 & 496 & 6.11e-08 \\
\hline
\multirow{5}{*}{3} & $3$ & 147 & 21 & 0.1 & 10 & 0.07 & 5.71e-03 & 675 & 20 & 0.3 & 11 & 0.1 & 2.62e-05 \\
 & $4$ & 675 & 19 & 0.2 & 10 & 0.1 & 1.60e-03 & 2883 & 23 & 0.9 & 12 & 0.5 & 2.61e-06 \\
 & $5$ & 2883 & 19 & 0.6 & 11 & 0.4 & 4.17e-04 & 11,907 & 22 & 4.7 & 15 & 3.3 & 2.31e-07 \\
 & $6$ & 11,907 & 20 & 2.6 & 12 & 1.6 & 1.08e-04 & 48,387 & 23 & 32 & 17 & 24 & 2.96e-08 \\
 & $7$ & 48,387 & 20 & 14 & 13 & 9.3 & 2.74e-05 & 195,075 & 22 & 215 & 18 & 183 & 2.85e-09 \\
\hline
\multirow{5}{*}{4} & $3$ & 196 & 30 & 0.2 & 12 & 0.08 & 5.59e-03 & 900 & 28 & 0.4 & 15 & 0.2 & 1.94e-05 \\
 & $4$ & 900 & 29 & 0.4 & 12 & 0.2 & 1.55e-03 & 3844 & 30 & 1.1 & 16 & 0.6 & 1.17e-06 \\
 & $5$ & 3844 & 27 & 1.0 & 15 & 0.5 & 4.18e-04 & 15,876 & 30 & 5.3 & 17 & 3.2 & 7.14e-08 \\
 & $6$ & 15,876 & 27 & 3.6 & 16 & 2.1 & 1.07e-04 & 64,516 & 30 & 31 & 18 & 19 & 3.43e-09 \\
 & $7$ & 64,516 & 28 & 17 & 17 & 10 & 2.70e-05 & 260,100 & 30 & 203 & 19 & 126 & 1.20e-09 \\
\hline
\multirow{5}{*}{5} & $3$ & 245 & 40 & 0.4 & 16 & 0.1 & 5.91e-03 & 1125 & 38 & 0.7 & 16 & 0.3 & 1.91e-05 \\
 & $4$ & 1125 & 38 & 0.6 & 16 & 0.2 & 1.55e-03 & 4805 & 40 & 1.6 & 16 & 0.7 & 1.20e-06 \\
 & $5$ & 4805 & 36 & 1.2 & 15 & 0.5 & 4.07e-04 & 19,845 & 42 & 8.3 & 19 & 3.9 & 7.39e-08 \\
 & $6$ & 19,845 & 36 & 4.7 & 17 & 2.2 & 1.07e-04 & 80,645 & 40 & 41 & 21 & 23 & 4.05e-09 \\
 & $7$ & 80,645 & 37 & 25 & 18 & 12 & 2.71e-05 & 325,125 & 40 & 235 & 22 & 130 & 7.42e-09 \\
\hline
\end{tabular}}
\end{footnotesize}
\end{center}
\end{table}

Tables \ref{sequential_heat_equation_table_Gauss}--\ref{sequential_heat_equation_table_Radau} show a very mild dependence of our SVD-based preconditioner with respect to the mesh size $h$ and the number of stages $s$. Nonetheless, the preconditioner we propose is able to reach convergence in less than 25 iterations. Although the  block-diagonal preconditioner $\mathcal{P}_{\mathrm{MNS}}$ is optimal with respect to the discretization parameters, the number of iterations required to reach the prescribed tolerance is not robust with respect to the number of stages $s$. For instance, in order to reach convergence the solver needs almost 60 iterations for the 5-stage Lobatto IIIC method. On the other hand, our solver does not suffer (drastically) from this dependence. In fact, when employing a high number of stages, our new solver can be between 2 and 3 times faster. Finally, we would like to note that the error behaves as predicted, that is, the method behaves as a second- and third-order method with $\mathbf{Q}_1$ and $\mathbf{Q}_2$ finite elements, respectively. In particular, we do not experience any order reduction for the numerical tests carried out in this work, aside the pollution of the linear residual on the numerical solution.

\subsubsection{Heat equation: sequential all-at-once solve}
In this section, we present the numerical results for the (sequential) solve of the all-at-once Runge--Kutta discretization of the heat equation given in \eqref{system_all_at_once_RK_heat_equation}. The problem we consider here is the same as the one in Section \ref{heat_equation:sequential_time_stepping}. We employ as a preconditioner the matrix $\mathcal{P}$ given in \eqref{preconditioner_all_at_once}, with the matrix $\widehat{S}$ defined in \eqref{widehat_S} approximated with a block-forward substitution. Within this process, the matrix $\widehat{X}$ defined in \eqref{widehat_X} is applied inexactly through an approximate inversion of the block for the stages. More specifically, given the results of the previous section, we approximately invert each block for the stages with 5 GMRES iterations preconditioned with the matrix $\mathcal{P}_{\mathrm{RK}}$ defined in \eqref{prec_heat_eqaution}. This process is also employed for approximating the $(2,2)$-block $\Theta$. Since the inner iteration is based on a fixed number of GMRES iterations, we have to employ the flexible version of GMRES derived in \cite{Saad} as an outer solver. As above, we restart FGMRES after every 10 iterations. Our implementation is based on the flexible GMRES routine in the TT-Toolbox \cite{TT_Toolbox}. The solver is run until a relative tolerance of $10^{-8}$ is achieved.

In Tables \ref{sequential_allatonce_heat_equation_table_Gauss}--\ref{sequential_allatonce_heat_equation_table_Radau}, we report the number of FGMRES iterations and the CPU times, for different Runge--Kutta methods. Further, we report the numerical error $v_{\text{error}}$ in the scaled vector $\ell^\infty$-norm, defined as in the previous section, together with the dimension of the system solved for each method.

\begin{table}[!ht]
\caption{All-at-once solve of the heat equation: degrees of freedom (DoF), FGMRES iterations, CPU times, and resulting relative errors in $v$ for Gauss methods, employing $\mathbf{Q}_1$ and $\mathbf{Q}_2$ finite elements.}\label{sequential_allatonce_heat_equation_table_Gauss}
\begin{center}
\begin{footnotesize}
{\begin{tabular}{|c|c||c||c|c|c||c||c|c|c|}
\hline
\multicolumn{1}{|c}{} & \multicolumn{1}{|c||}{} & \multicolumn{4}{c||}{$\mathbf{Q}_1$} & \multicolumn{4}{c|}{$\mathbf{Q}_2$} \\
\cline{3-10}
$s$ & $l$ & DoF & $\texttt{it}$ & CPU & $v_{\text{error}}$ & DoF & $\texttt{it}$ & CPU & $v_{\text{error}}$ \\
\hline
\hline
\multirow{4}{*}{2} & $3$ & 637 & 6 & 0.3 & 6.06e-03 & 4275 & 6 & 0.8 & 1.08e-04 \\
 & $4$ & 4275 & 6 & 0.7 & 1.45e-03 & 29,791 & 8 & 4.0 & 1.44e-05 \\
 & $5$ & 24,025 & 6 & 1.8 & 4.38e-04 & 194,481 & 8 & 20 & 2.07e-06 \\
 & $6$ & 146,853 & 8 & 13 & 9.62e-05 & 1,322,578 & 8 & 148 & 1.73e-07 \\
\hline
\multirow{4}{*}{3} & $3$ & 833 & 9 & 0.8 & 5.40e-03 & 3825 & 8 & 1.1 & 3.57e-05 \\
 & $4$ & 3825 & 8 & 1.0 & 1.51e-03 & 24,025 & 9 & 4.0 & 1.98e-06 \\
 & $5$ & 24,025 & 9 & 3.2 & 3.52e-04 & 130,977 & 12 & 25 & 1.02e-07 \\
 & $6$ & 115,101 & 12 & 16 & 8.37e-05 & 790,321 & 13 & 167 & 1.57e-08 \\
\hline
\end{tabular}}
\end{footnotesize}
\end{center}
\end{table}

\begin{table}[!ht]
\caption{All-at-once solve of the heat equation: degrees of freedom (DoF), FGMRES iterations, CPU times, and resulting relative errors in $v$ for Lobatto IIIC methods, employing $\mathbf{Q}_1$ and $\mathbf{Q}_2$ finite elements.}\label{sequential_allatonce_heat_equation_table_Lobatto}
\begin{center}
\begin{footnotesize}
{\begin{tabular}{|c|c||c||c|c|c||c||c|c|c|}
\hline
\multicolumn{1}{|c}{} & \multicolumn{1}{|c||}{} & \multicolumn{4}{c||}{$\mathbf{Q}_1$} & \multicolumn{4}{c|}{$\mathbf{Q}_2$} \\
\cline{3-10}
$s$ & $l$ & DoF & $\texttt{it}$ & CPU & $v_{\text{error}}$ & DoF & $\texttt{it}$ & CPU & $v_{\text{error}}$ \\
\hline
\hline
\multirow{4}{*}{2} & $3$ & 1225 & 5 & 0.5 & 1.28e-02 & 11,025 & 6 & 2.1 & 2.20e-03 \\
 & $4$ & 11,025 & 6 & 1.7 & 3.56e-03 & 133,579 & 8 & 18 & 3.25e-04 \\
 & $5$ & 93,217 & 7 & 8.2 & 1.05e-03 & 1,528,065 & 8 & 163 & 4.56e-05 \\
 & $6$ & 766,017 & 8 & 65 & 2.71e-04 & 17,580,610 & 10 & 2512 & 5.81e-06 \\
\hline
\multirow{4}{*}{3} & $3$ & 833 & 6 & 0.5 & 5.42e-03 & 5625 & 7 & 1.3 & 9.82e-05 \\
 & $4$ & 5625 & 7 & 1.1 & 1.28e-03 & 39,401 & 7 & 5.1 & 1.58e-05 \\
 & $5$ & 31,713 & 7 & 3.3 & 3.63e-04 & 257,985 & 9 & 34 & 2.59e-06 \\
 & $6$ & 194,481 & 7 & 16 & 8.27e-05 & 1,758,061 & 10 & 292 & 3.68e-07 \\
\hline
\multirow{4}{*}{4} & $3$ & 1029 & 9 & 1.0 & 5.75e-03 & 4725 & 9 & 1.7 & 2.37e-05 \\
 & $4$ & 4725 & 9 & 1.4 & 1.54e-03 & 29,791 & 12 & 7.3 & 1.57e-06 \\
 & $5$ & 29,791 & 12 & 5.9 & 3.53e-04 & 162,729 & 12 & 31 & 1.38e-07 \\
 & $6$ & 142,884 & 12 & 22 & 8.40e-05 & 983,869 & 13 & 231 & 1.75e-07 \\
\hline
\multirow{4}{*}{5} & $3$ & 931 & 10 & 1.1 & 5.91e-03 & 5625 & 12 & 2.8 & 1.94e-05 \\
 & $4$ & 5625 & 12 & 2.4 & 1.54e-03 & 29,791 & 13 & 8.6 & 1.19e-06 \\
 & $5$ & 24,025 & 12 & 4.9 & 4.01e-04 & 146,853 & 13 & 33 & 2.40e-07 \\
 & $6$ & 123,039 & 12 & 20 & 9.64e-05 & 790,321 & 15 & 232 & 4.85e-08 \\
\hline
\end{tabular}}
\end{footnotesize}
\end{center}
\end{table}

\begin{table}[!ht]
\caption{All-at-once solve of the heat equation: degrees of freedom (DoF), FGMRES iterations, CPU times, and resulting relative errors in $v$ for Radau IIA methods, employing $\mathbf{Q}_1$ and $\mathbf{Q}_2$ finite elements.}\label{sequential_allatonce_heat_equation_table_Radau}
\begin{center}
\begin{footnotesize}
{\begin{tabular}{|c|c||c||c|c|c||c||c|c|c|}
\hline
\multicolumn{1}{|c}{} & \multicolumn{1}{|c||}{} & \multicolumn{4}{c||}{$\mathbf{Q}_1$} & \multicolumn{4}{c|}{$\mathbf{Q}_2$} \\
\cline{3-10}
$s$ & $l$ & DoF & $\texttt{it}$ & CPU & $v_{\text{error}}$ & DoF & $\texttt{it}$ & CPU & $v_{\text{error}}$ \\
\hline
\hline
\multirow{4}{*}{2} & $3$ & 931 & 6 & 0.5 & 5.09e-03 & 5625 & 6 & 1.0 & 2.02e-04 \\
 & $4$ & 5625 & 6 & 0.9 & 1.35e-03 & 47,089 & 7 & 5.3 & 2.51e-05 \\
 & $5$ & 38,440 & 6 & 3.1 & 3.46e-04 & 384,993 & 8 & 41 & 3.41e-06 \\
 & $6$ & 254,016 & 8 & 21 & 8.35e-05 & 3,112,897 & 8 & 351 & 5.71e-07 \\
\hline
\multirow{4}{*}{3} & $3$ & 833 & 7 & 0.5 & 5.71e-03 & 4725 & 8 & 1.2 & 2.57e-05 \\
 & $4$ & 4725 & 7 & 1.0 & 1.46e-03 & 27,869 & 9 & 4.4 & 2.28e-06 \\
 & $5$ & 27,869 & 9 & 3.7 & 3.30e-04 & 178,605 & 10 & 26 & 2.24e-07 \\
 & $6$ & 130,977 & 9 & 14 & 1.03e-04 & 1,048,385 & 12 & 204 & 2.68e-08 \\
\hline
\multirow{4}{*}{4} & $3$ & 784 & 9 & 0.7 & 5.59e-03 & 4725 & 10 & 1.7 & 1.94e-05 \\
 & $4$ & 4725 & 10 & 1.5 & 1.54e-03 & 24,986 & 10 & 4.8 & 1.12e-06 \\
 & $5$ & 24,986 & 10 & 4.0 & 3.77e-04 & 142,884 & 12 & 27 & 7.56e-08 \\
 & $6$ & 123,039 & 12 & 18 & 9.00e-05 & 741,934 & 15 & 196 & 6.63e-07 \\
\hline
\multirow{4}{*}{5} & $3$ & 931 & 12 & 1.2 & 5.91e-03 & 5625 & 14 & 3.2 & 1.92e-05 \\
 & $4$ & 5625 & 14 & 2.7 & 1.54e-03 & 24,025 & 13 & 6.4 & 1.16e-06 \\
 & $5$ & 24,025 & 14 & 5.7 & 4.01e-04 & 146,853 & 14 & 35 & 5.18e-08 \\
 & $6$ & 123,039 & 14 & 23 & 9.65e-05 & 693,547 & 13 & 171 & 1.32e-07 \\
\hline
\end{tabular}}
\end{footnotesize}
\end{center}
\end{table}

Tables \ref{sequential_allatonce_heat_equation_table_Gauss}--\ref{sequential_allatonce_heat_equation_table_Radau} show the parameter robustness of our preconditioner, with the linear solver converging in at most 15 iterations for all the methods and the parameters chosen. We are able to observe almost linear scalability of the solver with respect to the dimension of the problem. We would like to note that, when employing the finest grid for $\mathbf{Q}_2$ elements for the $2$-stage Lobatto IIIC method, the all-at-once discretization results in a system of 17,580,610 unknowns to be solved on a laptop. Finally, we also observe the predicted second- and third-order convergence for $\mathbf{Q}_1$ and $\mathbf{Q}_2$ discretization, respectively, until the linear tolerance causes slight pollution of the discretized solution for the finest grid of $\mathbf{Q}_2$ discretization (see, for instance, level $l=6$ for the 5-stage Radau IIA method in Table \ref{sequential_allatonce_heat_equation_table_Radau}).

\subsubsection{Stokes equations: sequential time-stepping}
In this section, we provide numerical results of the efficiency of the preconditioner $\widetilde{\mathcal{P}}_\mathrm{RK}$ defined in \eqref{prec_Stokes_equation} for the system of the stages for the Stokes problem. We run preconditioned GMRES up to a tolerance of $10^{-8}$ on the relative residual, with a restart after every 10 iterations.

As for the heat equation, we first present results for a sequential time-stepping. We take $t_f=2$, and solve the problem \eqref{Stokes_equation}, testing our solver against the following exact solution:
\begin{displaymath}
\begin{array}{l}
\vspace{0.2ex}
\vec{v}(\mathbf{x},t) = e^{t_f - t} [20x_1 x_2^3, 5 x_1^4 - 5x_2^4]^\top,\\
p(\mathbf{x},t) =e^{10t_f - t} \left( 60x_1^2x_2 - 20 x_2^3 \right)+\text{constant},
\end{array}
\end{displaymath}
with source function $\vec{f}$ as well as initial and boundary conditions obtained from this solution. The previous is an example of time-dependent \emph{colliding flow}. A time-independent version of this flow has been used in \cite[Section 3.1]{Norburn_Silvester}. We evaluate the errors in the $L^\infty(\mathcal{H}_0^1(\Omega)^d)$ norm for the velocity and in the $L^\infty(L^2(\Omega))$ norm for the pressure, defined respectively as follows:
\begin{displaymath}
\begin{array}{c}
\displaystyle v_{\text{error}} = \max_{n} \left[ (\mathbf{v}_n - \mathbf{v}_n^{\mathrm{sol}})^\top K_v (\mathbf{v}_n - \mathbf{v}_n^{\mathrm{sol}}) \right]^{1/2}, \\
\displaystyle p_{\text{error}} = \max_{n} \left[ (\mathbf{p}_n - \mathbf{p}_n^{\mathrm{sol}})^\top M_p (\mathbf{p}_n - \mathbf{p}_n^{\mathrm{sol}}) \right]^{1/2}.
\end{array}
\end{displaymath}
In the previous, $\mathbf{v}_n^{\mathrm{sol}}$ (resp., $\mathbf{p}_n^{\mathrm{sol}}$) is the discretized exact solution for $\vec{v}$ (resp., $p$) at time $t_n$.

In Tables \ref{sequential_Stokes_equations_table_Gauss}--\ref{sequential_Stokes_equations_table_Lobatto_Radau}, we report the average number of GMRES iterations and the average CPU times together with the numerical errors on the solutions, for different Runge--Kutta methods. Further, in Table \ref{sequential_Stokes_equations_table_Lobatto_Radau} we report the dimensions of the system solved for Lobatto IIIC and Radau IIA methods. As for the heat equation, those values coincide with the dimensions of the system arising when employing the corresponding level of refinement in an $s$-stage Gauss method.

\begin{table}[!ht]
\caption{Sequential solve of the Stokes equations: average GMRES iterations, average CPU times, and resulting errors in $\vec{v}$ and $p$, for Gauss methods.}\label{sequential_Stokes_equations_table_Gauss}
\begin{center}
\begin{footnotesize}
{\begin{tabular}{|c||c|c|c|c||c|c|c|c|}
\hline
\multicolumn{1}{|c||}{} & \multicolumn{4}{c||}{$s=2$} & \multicolumn{4}{c|}{$s=3$} \\
\cline{2-9}
\cline{2-9}
$l$ & $\texttt{it}$ & CPU & $v_{\text{error}}$ & $p_{\text{error}}$ & $\texttt{it}$ & CPU & $v_{\text{error}}$ & $p_{\text{error}}$ \\
\hline
\hline
$3$ & 36 & 0.15 & 1.29e+00 & 9.73e-01 & 54 & 0.36 & 1.42e+00 & 7.19e-01 \\
$4$ & 37 & 0.34 & 1.66e-01 & 4.86e-01 & 61 & 0.97 & 1.85e-01 & 1.20e-01 \\
$5$ & 42 & 1.5 & 2.08e-02 & 2.15e-01 & 68 & 3.3 & 2.39e-02 & 3.20e-02 \\
$6$ & 47 & 6.8 & 2.57e-03 & 1.20e-01 & 75 & 17 & 3.07e-03 & 1.10e-02 \\
$7$ & 50 & 32 & 3.12e-04 & 5.94e-02 & 83 & 85 & 3.96e-04 & 3.87e-03 \\
\hline
\end{tabular}}
\end{footnotesize}
\end{center}
\end{table}

\begin{table}[!ht]
\caption{Sequential solve of the Stokes equations: degrees of freedom (DoF), average GMRES iterations, average CPU times, and resulting errors in $\vec{v}$ and $p$, for Lobatto IIIC and Radau IIA methods.}\label{sequential_Stokes_equations_table_Lobatto_Radau}
\begin{center}
\begin{footnotesize}
{\begin{tabular}{|c|c||c||c|c|c|c||c|c|c|c|}
\hline
\multicolumn{1}{|c}{} & \multicolumn{1}{|c||}{} & \multicolumn{1}{c||}{} & \multicolumn{4}{c||}{Lobatto IIIC} & \multicolumn{4}{c|}{Radau IIA} \\
\cline{4-11}
$s$ & $l$ & DoF & $\texttt{it}$ & CPU & $v_{\text{error}}$ & $p_{\text{error}}$ & $\texttt{it}$ & CPU & $v_{\text{error}}$ & $p_{\text{error}}$ \\
\hline
\hline
\multirow{4}{*}{2} & $3$ & 1062 & 33 & 0.14 & 1.41e+00 & 5.67e-01 & 35 & 0.15 & 1.37e+00 & 5.07e-01 \\
 & $4$ & 4422 & 39 & 0.36 & 2.04e-01 & 5.19e-02 & 38 & 0.37 & 1.77e-01 & 4.20e-02 \\
 & $5$ & 18,054 & 46 & 1.5 & 3.19e-02 & 5.11e-03  & 44 & 1.4 & 2.27e-02 & 3.53e-03 \\
 & $6$ & 72,966 & 50 & 6.8 & 5.25e-03 & 5.45e-04 & 49 & 6.9 & 2.91e-03 & 3.13e-04 \\
 & $7$ & 293,382 & 54 & 34 & 8.95e-04 & 6.20e-05 & 54 & 36 & 3.90e-04 & 3.36e-05\\
\hline
\multirow{4}{*}{3} & $3$ & 1593 & 42 & 0.26 & 1.36e+00 & 5.06e-01 & 48 & 0.28 & 1.35e+00 & 5.01e-01 \\
 & $4$ & 6633 & 44 & 0.64 & 1.76e-01 & 4.18e-02 & 55 & 0.79 & 1.75e-01 & 4.16e-02 \\
 & $5$ & 27,081 & 54 & 2.7 & 2.24e-02 & 3.54e-03 & 64 & 3.1 & 2.23e-02 & 3.53e-03 \\
 & $6$ & 109,449 & 62 & 14 & 2.82e-03 & 3.04e-04 & 74 & 17 & 2.82e-03 & 3.02e-04 \\
 & $7$ & 440,073 & 70 & 73 & 3.55e-04 & 2.64e-05 & 78 & 85 & 3.54e-04 & 2.61e-05 \\
\hline
\multirow{4}{*}{4} & $3$ & 2124 & 56 & 0.46 & 1.35e+00 & 4.99e-01 & 67 & 0.54 & 1.36e+00 & 4.97e-01 \\
 & $4$ & 8844 & 67 & 1.3 & 1.75e-01 & 4.14e-02 & 69 & 1.4 & 1.75e-01 & 4.07e-02 \\
 & $5$ & 36,108 & 75 & 5.2 & 2.23e-02 & 3.49e-03 & 79 & 5.4 & 2.22e-02 & 3.45e-03 \\
 & $6$ & 145,932 & 80 & 25 & 2.81e-03 & 2.99e-04 & 87 & 28 & 2.80e-03 & 2.96e-04 \\
 & $7$ & 586,764 & 88 & 131 & 3.53e-04 & 2.59e-05 & 91 & 135 & 3.52e-04 & 2.56e-05 \\
\hline
\multirow{4}{*}{5} & $3$ & 2655 & 68 & 0.74 & 1.34e+00 & 4.88e-01 & 76 & 0.83 & 1.34e+00 & 4.89e-01 \\
 & $4$ & 11,055 & 82 & 2.2 & 1.75e-01 & 4.08e-02 & 90 & 2.3 & 1.74e-01 & 4.10e-02 \\
 & $5$ & 45,135 & 96 & 8.9 & 2.22e-02 & 3.46e-03 & 93 & 8.3 & 2.21e-02 & 3.43e-03 \\
 & $6$ & 182,415 & 99 & 43 & 2.80e-03 & 2.94e-04 & 105 & 45 & 2.80e-03 & 2.95e-04 \\
 & $7$ & 733,455 & 110 & 242 & 3.52e-04 & 2.55e-05 & 110 & 247 & 3.52e-04 & 2.55e-05 \\
\hline
\end{tabular}}
\end{footnotesize}
\end{center}
\end{table}

From Tables \ref{sequential_Stokes_equations_table_Gauss}--\ref{sequential_Stokes_equations_table_Lobatto_Radau}, we can observe that the mesh size $h$ is slightly affecting the average number of GMRES iterations\footnote{From the authors' experience, this is because we are approximating the $(1,1)$-block in conjunction with the block-commutator argument. In fact, the block-commutator preconditioner would have been more effective if one employed a fixed number of GMRES iterations to approximately invert the $(1,1)$-block. However, having a further ``layer'' of preconditioner would have been non-optimal in the all-at-once approach.}. Further, we can observe a more invasive dependence on the number of stages $s$. Nonetheless, the CPU times scale almost linearly with respect to the dimension of the system solved. Finally, we can observe that the error on the velocity and on the pressure behaves (roughly) as second order for every method but the $2$-stage Gauss method, for which we observe a first-order convergence for the error on the pressure.

\subsection{Parallel results}
We can now show the parallel efficiency of our solver. The parallel code is written in Fortran 95 with pure MPI as the message passing protocol, and compiled
with the \texttt{-O5} option.
We run our code on the M100 supercomputer, an IBM Power AC922, 
with 980 computing nodes and 2$\times$16 cores at 2.6(3.1) GHz on each node.
The local node RAM is about 242 GB.

Throughout the whole section we will denote with $T_\p$ the CPU elapsed times
expressed in seconds when running the code on $\p$ processors.
We include a measure of the parallel
efficiency achieved by the code. To this aim we will  denote as
$S_\p$ and $E_\p$ the speedup and parallel efficiency, respectively, defined as
\[ S_\p = \frac{T_1}{T_\p}, \qquad 
E_\p= \dfrac{S_\p}{\p}  = \frac{T_{1}}{T_\p {\p}} . \]
Moreover $T_\p^{\Theta}$ and  $T_\p^{S}$ refer to the cost of   approximate inversion of the two diagonal
blocks of the preconditioner, which are the most costly operation in the whole algorithm.

\noindent
\begin{remark}
Since we could not run the largest problems with $\p = 1$ for obvious memory and CPU time reasons, we 
approximate $T_1$ from below under the following assumptions:
\begin{enumerate}
	\item The approximate inversion of the $(2,2)$-block $\Theta$ is completely scalable as it does not require any communication among processors, hence for any $\p$, $T_1^{\Theta} = \p T_{\p}^{\Theta}$.
	\item Since in sequential computations $T_1^{\Theta}  < T_1^{S}$,  we set  $T_1^{S}  = T_1^{\Theta} $.
	\item We therefore use $T_1 = 2 T_1^{\Theta}  =  2 \p T_{\p}^{\Theta}$.
\end{enumerate}
With these assumptions, as we are underestimating $T_1$, we also underestimate both $S_\p$ and $E_\p$ for our code.
\end{remark}

We adopt a uniform distribution of the data (cofficient matrix and right hand side) among processors.
We assume that the number of processors $\p$ divides the number of time-steps $n_t$, so that each processor
manages $n_t/\p$ time-steps, and the whole space-discretization matrices are replicated on each processor.

\subsubsection{XBraid}
To solve the system with $\widehat S$ at  every FGMRES iteration we employed the Parallel Multigrid in time software: XBraid \cite{Falgout_Friedhoff_Kolev_MacLachlan_Schroder}.
After some preliminary testing on smaller problems, we set the relevant parameters as follows:
\smallskip

\begin{itemize}
\item
Coarsening factor: \texttt{cfactor} $=4$, \\[-1.0em]
\item
	 Minimum coarse grid size: \texttt{min\_coarse} $=2$,  \\[-1.0em]
\item
Maximum number of iterations: \texttt{max\_its} $=1$. 
\end{itemize}
We refer the reader to the manual \cite{XBraid} for information on the meaning of these parameters, and for a description of the XBraid package.

\subsubsection{Description of test cases}
We solve two variants of the heat equation problem described in Section \ref{heat_equation:sequential_time_stepping}, for two different final time $t_f$ (Test cases \#1 and \#2). Further, for the Stokes equations we consider a time-dependent version of the \emph{lid-driven cavity} \cite[Example\,3.1.3]{Elman_Silvester_Wathen}. Starting from an initial condition $\vec{v}(\mathbf{x},0)=\vec{0}$, the flow is described by \eqref{Stokes_equation} with $\vec{f}(\mathbf{x}, t)=\vec{0}$ and the following boundary conditions:
\begin{displaymath}
\vec{g}(\mathbf{x},t)=
\left\{
\begin{array}{cl}
\left[t,0\right]^\top & \mathrm{on} \: \partial \Omega_1 \times (0,1),\\
\left[1,0\right]^\top & \mathrm{on} \: \partial \Omega_1 \times [1,t_f),\\
\left[0,0\right]^\top & \mathrm{on} \: (\partial \Omega \setminus \partial\Omega_1) \times (0,t_f).
\end{array}
\right.
\end{displaymath}
Here, we set $\partial \Omega_1 := \left(-1,1 \right)\times \left\{1\right\}$. Within this setting, the system is able to reach a steady state, see \cite{Shankar_Deshpande}. We would like to note that, in order for a system of this type to reach stationarity, one has to integrate the problem up to about 100 time units. However, in our tests we integrate the problem for longer times. Specifically, we solve the problem up to $t_f = 386.8$ (Test case \#3) and $t_f = 510.7$ (Test case \#4). This is done as a proof of concept of how Runge--Kutta methods in time can be applied within a parallel solver in order to integrate problems for very long times.

In Table \ref{parallel_settings}, we report the settings for the different tests together with the total number of degrees of freedom DoF and the number of FGMRES iterations $\texttt{it}$.
\noindent
\begin{table}[h!]
\begin{center}
	\caption{Description of the test cases for parallel runs. The last column reports the outer FGMRES iterations.}
	\label{parallel_settings}
	\renewcommand{\arraystretch}{1.2}
	\begin{footnotesize}
	\begin{tabular}{|c|l|l|c||c|c||r|r|c||c||c|}
	\hline
		& & & & \multicolumn{2}{c||}{order} & && & &\\ \cline{5-6}
		\#& Eq. & RK method & $t_f$ & {\small{FE}}\ &  {\small RK}& $n_t$  &$l$&  $s$ & DoF& $\texttt{it}$ \\
		\hline \hline
		1 &Heat &	Radau IIA &$  255.9 $& 3 & 7 &  $2048$&8& $4$ & $2,674,140,160$ & 18\\
		2 &Heat &	Lobatto IIIC &$ 2047.9$& 3 & 6 & $16,384$&7&  $4$ &$5,326,913,024$ & 13 \\
		3 &Stokes &	Radau IIA &$  386.8 $& 2 & 5 &  $2048$&7& $3$ & $1,201,839,360 $& 23 \\
		4 &Stokes &	Lobatto IIIC &$  510.7 $& 2& 6 &  $2048$&7& $4$ & $1,502,262,528 $& 28 \\
		\hline
	\end{tabular}
	\end{footnotesize}
\end{center}
\end{table}

In the next subsections we present the results of a strong scalability study for these four test cases.

\subsubsection{Heat equation}
The parallel results corresponding to Test cases \#1 and \#2 are summarized in Tables \ref{t1} and Figure \ref{f1}.
Extremely satisfactory parallel efficiency above 75\% with $\p \le 128$ (Test case \#1) and  $\p \le 512$ (Test case \#2) is achieved.
This is also accounted for by Figure \ref{f1} (left) which compares the ideal CPU times (corresponding to 100\% efficiency, red line) with the 
measured CPU times (blue line). Parallel efficiency degrades to 30$\%$ only when the ratio of the number of time-steps and the number of processors is smaller than $8$ (see, for instance, $\p=4096$).
\begin{table}[h!]
	\caption{Parallel solve of the heat equation: CPU time, runtime speedup and parallel efficiencies for 
	Test case \#1. }
	\label{t1}
\begin{center}
\renewcommand{\arraystretch}{1.15}
\begin{footnotesize}
	\begin{tabular}{|c||c|c|c||c|c|}
	\hline
		$\p$ & $T_\p^{\Theta}$ & $T_\p^{S}$  &$T_\p$  & $S_\p$ & $E_\p$  \\
	\hline \hline
		64 &   23,845  & 28,607  &52,549 & 58  & 91\% \\
		128 & 11,925  &19,133 &31,108 &98 & 77\% \\
		256 &   5991 & 15,523  &21,542 & 143   & 56\%   \\
		512 &   3012 & 13,761  &16,790 & 184   & 36\%   \\
		\hline
\end{tabular}
\end{footnotesize}
\end{center}
\end{table}

\begin{figure}[h!]
	\caption{Parallel solve of the heat equation: CPU time, runtime speedup and parallel efficiencies for 
	Test case \#2. On the right, CPU times vs ideal ones, computed assuming 100\% efficiency.}
	\label{f1}
\begin{minipage}{6.8cm}
\begin{center}
\renewcommand{\arraystretch}{1.15}
\begin{footnotesize}
	\begin{tabular}{|c||c|c|c||c|c|}
	\hline
		$\p$ & $T_\p^{\Theta}$ & $T_\p^{S}$  &$T_\p$  & $S_\p$ & $E_\p$ \\
	\hline \hline
		256 &  4924   & 5920  &10,879 & 232  & 91\%\\
		512  & 2519   & 3709  &6249 & 414  & 81\%\\
		 1024  & 1262 &  2570  & 3843 & 674   & 66\%  \\
		 2048 & 637 &  2138  & 2783 & 941   & 46\% 	\\	 
		 4096 & 308 &  1778  & 2286 & 1210   & 30\% 	\\	 
		 \hline
\end{tabular}
\end{footnotesize}
\end{center}
	\hspace{-6mm}
\end{minipage}
	\begin{minipage}{6.6cm}
\begin{center}
\includegraphics[width=6.0cm]{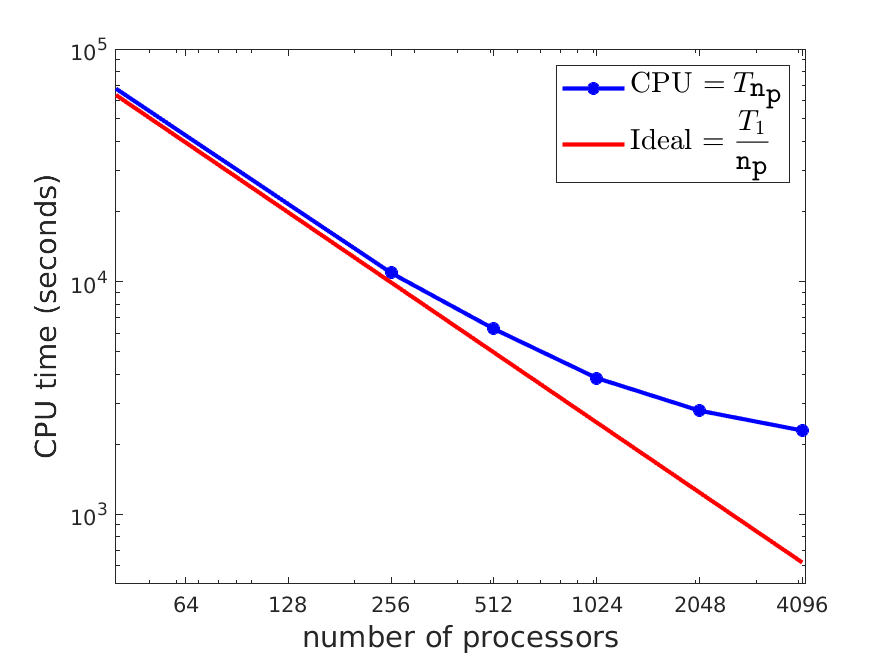}
\end{center}
\end{minipage}
\end{figure}

\subsubsection{Stokes equations}
We report here the parallel results for the simulation of the lid-driven cavity problem. For this test case, each block
of the preconditoner is approximately inverted with 10 GMRES (inner) iterations, while FGMRES is restarted
after 30 iterations, and stops whenever the relative residual is below the tolerance of $10^{-6}$. The results confirm that significant runtime speedups are achieved by the proposed approach.

\begin{table}[h!]
\begin{center}
	\caption{Parallel solve of the Stokes equations: CPUs, speedups, and parallel efficiencies for 
	Test cases \#3 (left) and \#4 (right). }
	\label{t3}
	\renewcommand{\arraystretch}{1.15}
	\begin{footnotesize}
	\begin{tabular}{|c||c|c|c|c|c||c|c|c|c|c|}
	\hline
		   & \multicolumn{5}{c||}{Radau IIA} &
		    \multicolumn{5}{c|}{Lobatto IIIC}  \\
		    \cline{2-11}
		$\p$ & $T_\p^{\Theta}$ & $T_\p^{S}$  &$T_\p$  & $S_\p$ & $E_\p$
		&$T_\p^{\Theta}$ & $T_\p^{S}$  &$T_\p$  & $S_\p$ & $E_\p$\\
	\hline \hline
		64 &   19,165  & 25,174  &44,410 & 55  & 86\% &31,865&39,939&71,920& 57& 89\%  \\
		128 & 9628  & 16,471 &26,140 &94 & 74\% &15,903&26,167&42,130&97 & 76\%  \\
		256 &   4830 & 13,197  &18,048 & 137   & 54\% &
		   7726 & 20,149  &28,883 & 142   & 55\%  \\
		   512 &2876&11,603&14,510&203&40\%& 4026 & 17,070 & 21,859 & 195 & 38\% \\
		   \hline
\end{tabular}
\end{footnotesize}
\end{center}
\end{table}

\section{Conclusions}\label{sec_5}
In this work, we presented a robust preconditioner for the numerical solution of the all-at-once linear system arising when employing a Runge--Kutta method in time. The proposed preconditioner consists of a block diagonal solve for the systems for the stages of the method, and a block-forward substitution for the Schur complement. Since we require a solver for the system for the stages, we proposed a preconditioner for the latter system based on the SVD of the Runge--Kutta coefficient matrix $A_{\mathrm{RK}}$. Sequential and parallel results showed the efficiency and robustness of the proposed preconditioner with respect to the discretization parameters and the number of stages of the Runge--Kutta method employed for a number of test problems.

Future work includes adapting this solver for the numerical solution of more complicated time-dependent PDEs as well as employing an adaptive time-stepping technique. Further, we plan to adopt a hybrid parallel programming paradigm. 
In this way, each of the $n_t/\p$ time-steps sequentially processed by each MPI rank in the present approach could be handled in parallel by different Open-MP tasks. Finally, for extremely fine space discretizations, parallelization in space will also be exploited.

\section*{Acknowledgements}
LB and AM gratefully acknowledge the support of the ``INdAM -- GNCS Project'', CUP\_E53C22001930001. JWP gratefully acknowledges financial support from the Engineering and Physical Sciences Research Council (EPSRC) UK grant EP/S027785/1. We acknowledge the CINECA award under the ISCRA initiative, for the availability of high performance computing resources and technical support. SL thanks Michele Benzi for useful discussions on Theorem \ref{optimality_preconditioner} and on the preconditioner for the Stokes equations. We thank Robert Falgout for useful discussions on the XBraid software.
	The idea of this work arose while AM was visiting the School of Mathematics of the University of Edinburgh under the Project HPC-EUROPA3 (INFRAIA-2016-1-730897), with the support of the EC Research Innovation Action under the H2020 Programme.

%
%

\begin{thebibliography}{99.}%
%
%



\vspace{-1.25em}

\bibitem{AbuLabdeh_MachLachlan_Farrell} R. Abu-Labdeh, S. MacLachlan, and P. E. Farrell, \emph{Monolithic multigrid for implicit Runge--Kutta discretizations of incompressible fluid flow}, arXiv:2202.07381, 2022.

\bibitem{Axelsson_Dravins_Neytcheva} O. Axelsson, I. Dravins, and M. Neytcheva, \emph{Stage-parallel preconditioners for implicit Runge--Kutta methods of arbritary high order. Linear problems}, Uppsala University, technical reports from the Department of Information Technology 2022-004, 2022.

\bibitem{HSL_MI20} J. Boyle, M. Mihajlovi\'{c}, and J. Scott, \emph{\texttt{HSL\_MI20}: an efficient AMG preconditioner for finite element problems in 3D}, Int. J. Numer. Meth. Eng., 82, pp. 64--98, 2010.

\bibitem{Butcher} J. C. Butcher, \emph{Numerical methods for ordinary differential equations}, Third Edition, John Wiley \& Sons, Ltd, 2016.

\bibitem{Danieli_Southworth_Wathen} F. Danieli, B. S. Southworth, and A. J. Wathen, \emph{Space-time block preconditioning for incompressible flow}, SIAM J. Sci. Comput., 44, pp. A337--A363, 2022.

\bibitem{Elman_Silvester_Wathen} H. C. Elman, D. J. Silvester, and A. J. Wathen: \emph{Finite elements and fast iterative solvers: with applications in incompressible fluid dynamics}, 2nd edition, Oxford University Press, 2014.

\bibitem{Emmett_Minion} M. Emmett and M. Minion, \emph{Toward an efficient parallel in time method for partial differential equations}, Comm. App. Math. Comput. Sci., 7, pp. 105--132, 2012.

\bibitem{Falgout_Friedhoff_Kolev_MacLachlan_Schroder} R. D. Falgout, S. Friedhoff, T. V. Kolev, S. P. MacLachlan, and  J. B. Schroder, \emph{Parallel time integration with multigrid}, SIAM J. Sci. Comput., 36, pp. C635--C661, 2014.

\bibitem{Friedhoff_Falgout_Kolev_MacLachlan_Schroder} S. Friedhoff, R. D. Falgout, T. V. Kolev, S. P. MacLachlan, and J. B. Schroder, \emph{A multigrid-in-time algorithm for solving evolution equations in parallel}, in Sixteenth Copper Mountain Conference on Multigrid Methods, Copper Mountain, CO, United States, 2013.

\bibitem{Gander_Halpern_Rannou_Ryan_2016} M. J. Gander, L. Halpern, J. Rannou, and J. Ryan, \emph{A direct solver for time parallelization}, in: Domain Decomposition Methods in Science and Engineering XXII. Springer, pp. 491--499, 2016.

\bibitem{Gander_Halpern_Rannou_Ryan_2019}  M. J. Gander, L. Halpern, J. Rannou, and J. Ryan, \emph{A direct time parallel solver by diagonalization for the wave equation}, SIAM J. Sci. Comput., 41, pp. A220--A245, 2019.

\bibitem{Golub_van_Loan} G. H. Golub and C. F. van Loan, \emph{Matrix computations}, 4th edition, The Johns Hopkins University Press, 1996.

\bibitem{GolubVargaI} G. H. Golub and R. S. Varga, \emph{Chebyshev semi-iterative methods, successive over-relaxation iterative methods, and second order Richardson iterative methods, part I}, Numer. Math., 3, pp. 147--156, 1961.

\bibitem{GolubVargaII} G. H. Golub and R. S. Varga, \emph{Chebyshev semi-iterative methods, successive over-relaxation iterative methods, and second order Richardson iterative methods, part II}, Numer. Math., 3, pp. 157--168, 1961.

\bibitem{Hairer_Wanner} E. Hairer and G. Wanner, \emph{Solving ordinary differential equations II: stiff and differential-algebraic problems}, 2nd edition, Springer, Berlin, 1996.

\bibitem{Hall} B. C. Hall, \emph{Lie groups, Lie algebras, and representations: an elementary introduction}, 2nd edition, Springer, 2015.

\bibitem{Hinze_Koster_Turek} M. Hinze, M. K\"{o}ster, and S. Turek: \emph{A space--time multigrid method for optimal flow control}. In: Leugering G., Engell S., Griewank A., Hinze M., Rannacher R., Schulz V., Ulbrich M., Ulbrich S. (eds.), \emph{Constrained Optimization and Optimal Control for Partial Differential Equations}, pp. 147--170, Springer, Basel, 2012.

\bibitem{Horn_Johnson} R. A. Horn and C. A. Johnson: \emph{Topics in matrix analysis}, Cambridge University Press, Cambridge, 1991.

\bibitem{Ipsen01} I. C. F. Ipsen: \emph{A note on preconditioning nonsymmetric matrices}, SIAM J. Sci. Comput., 23, pp. 1050--1051, 2001.

\bibitem{Kressner_Massei_Zhu} D. Kressner, S. Massei, and J. Zhu: \emph{Improved parallel-in-time integration via low-rank updates and interpolation}, arXiv:2204.03073, 2022.

\bibitem{Lambert} J. D. Lambert, \emph{Numerical method for ordinary differential systems: the initial value problem}, John Wiley \& Sons, Ltd, New York, 1991.

\bibitem{Leveque_Pearson} S. Leveque and J. W. Pearson, \emph{Parameter-robust preconditioning for Oseen iteration applied to stationary and instationary Navier--Stokes control}, SIAM J. Sci. Comput., 44, pp. B694--B722, 2022.

\bibitem{Lions_Maday_Turinici} J. L. Lions, Y. Maday, and G. Turinici, \emph{A ``Parareal'' in time discretization of PDE’s}, C. R. Math. Acad. Sci. Paris, 332, pp. 661--668, 2001.

\bibitem{Liu_Wang_Wu_Zhou} J. Liu, X.-S. Wang, S.-L. Wu, and T. Zhou, \emph{A well-conditioned direct PinT algorithm for first- and second-order evolutionary equations}, Adv. Comput. Math., 48, Art. 16, 2022.

\bibitem{Maday_Ronquist} Y. Maday and E. M. R{\o}nquist, \emph{Parallelization in time through tensor-product space-time solvers}, Comptes Rendus Mathematique, 346, pp. 113--118, 2008.

\bibitem{Mardal_Nilssen_Staff} K.-A. Mardal, T. K. Nilssen, and G. A. Staff, \emph{Order-optimal preconditioners for implicit Runge--Kutta schemes applied to parabolic PDEs}, SIAM J. Sci. Comput., 29, pp. 361--375, 2007.

\bibitem{McDonald_Pestana_Wathen} E. McDonald, J. Pestana, and A. Wathen, \emph{Preconditioning and iterative solution of all-at-once systems for evolutionary partial differential equations}, SIAM J. Sci. Comput., 40, pp. A1012--A1033, 2018.

\bibitem{Murphy:1999:NPI:359189.359190} M. F. Murphy, G. H. Golub, and A. J. Wathen: \emph{A note on preconditioning for indefinite linear systems}, SIAM J. Sci. Comput., 21, pp. 1969--1972, 2000.

\bibitem{Norburn_Silvester} S. Norburn and D. J. Silvester: \emph{Stabilised vs. stable mixed methods for incompressible flow}, Comput. Methods Appl. Mech. Eng., 166, pp. 131--141, 1998.

\bibitem{TT_Toolbox} I. V. Oseledets \emph{et al.}: \emph{TT-Toolbox Software}; see \url{https://github.com/oseledets/TT-Toolbox}.

\bibitem{Rana_Howle_Long_Meek_Milestone} Md. M. Rana, V. E. Howle, K. Long, A. Meek, and W. Milestone, \emph{A new block preconditioner for implicit Runge--Kutta methods for parabolic PDE problems}, SIAM J. Sci. Comput., 43, pp. S475--S495, 2021.

\bibitem{Saad} Y. Saad: \emph{A flexible inner--outer preconditioned GMRES algorithm}, SIAM J. Sci. Comput., 14, pp. 461--469, 1993.

\bibitem{Saad_Schultz} Y. Saad and M. H. Schultz: \emph{GMRES: a generalized minimal residual algorithm for solving nonsymmetric linear systems}, SIAM J. Sci. Stat. Comput., 7, pp. 856--869, 1986.

\bibitem{Shankar_Deshpande} P. N. Shankar, and M. D. Deshpande: \emph{Fluid mechanics in the driven cavity}, Ann. Rev. Fluid Mech., 32, pp. 93--136, 2000.

\bibitem{Southworth_Krzysik_Pazner_Sterck} B. S. Southworth, O. Krzysik, W. Pazner, and H. De Sterck, \emph{Fast solution of fully implicit Runge--Kutta and discontinuous Galerkin in time for numerical PDEs, part I: the linear setting}, SIAM J. Sci. Comput., 44, pp. A416--A443, 2022.

\bibitem{Staff_Mardal_Nilssen} G. A. Staff, K.-A. Mardal, and T. K. Nilssen, \emph{Preconditioning of fully implicit Runge-Kutta schemes for parabolic PDEs}, Model. Identif. Control, 27, pp. 109--123, 2006.

\bibitem{Wathen_Rees} A. Wathen and T. Rees, \emph{Chebyshev semi-iteration in preconditioning for problems including the mass matrix}, Electron. Trans. Numer. Anal., 34, pp. 125--135, 2009.

\bibitem{XBraid} XBraid: Parallel multigrid in time. \url{http://llnl.gov/casc/xbraid}.



\end{thebibliography}
%

\end{document}